\newcommand{\RR}{\mathbb{R}}
\newcommand{\bbS}{\mathbb{S}}
\newcommand{\CC}{\mathbb{C}}
\newcommand{\PP}{\mathbb{P}}
\newcommand{\ZZ}{\mathbb{Z}}
\newcommand{\Lcal}{\mathcal{L}}
\newcommand{\adjHP}{\mathcal{A}}
\newcommand{\tr}{\mathrm{tr}}
\newcommand{\rk}{\mathrm{rk}}
\newcommand{\GL}{\mathrm{GL}}
\newcommand{\K}{K}
\newcommand{\adj}{\mathrm{adj}}
\newcommand{\CH}{\mathrm{CH}}
\newcommand{\join}{\mathrm{join}}
\newcommand{\xdashrightarrow}[2][]{\ext@arrow 0359\rightarrowfill@@{#1}{#2}}
\def\rightarrowfill@@{\arrowfill@@\relax\relbar\rightarrow}
\def\arrowfill@@#1#2#3#4{%
  $\m@th\thickmuskip0mu\medmuskip\thickmuskip\thinmuskip\thickmuskip
   \relax#4#1
   \xleaders\hbox{$#4#2$}\hfill
   #3$%
}
\definecolor{forest}{RGB}{11,128,35}
 \title{The maximum likelihood degree \\ of linear spaces of symmetric matrices}
  \author{Carlos Am\'endola}
  \address{%
  Technical University of Munich, Ulm University \\
\email{carlos.amendola@tum.de}
}
\author{Lukas Gustafsson}
\address{%
KTH Royal Institute of Technology \\
\email{lukasgu@kth.se}
}
\author{Kathl\'en Kohn}
\address{%
KTH Royal Institute of Technology\\
\email{kathlen@kth.se}
}
\author{Orlando Marigliano}
\address{%
KTH Royal Institute of Technology\\
\email{orlandom@kth.se}
}
\author{Anna Seigal}
\address{%
University of Oxford \\
\email{seigal@maths.ox.ac.uk}
}
\date{2020/11/11}
\begin{document}

\maketitle

\begin{abstract} 
We study multivariate Gaussian models that are described by linear conditions on the concentration matrix. We compute the maximum likelihood (ML) degrees of these models. 
That is, we count the critical points of the likelihood function over a linear space of symmetric matrices. 
We obtain new formulae for the ML degree, one via line geometry, and another using Segre classes from intersection theory. We settle the case of codimension one models, and characterize the degenerate case when the ML degree is zero.
\end{abstract}

\section{Introduction}

We study $n$-dimensional multivariate Gaussian distributions with mean zero. Every such distribution is described by the covariance matrix or, its inverse, the concentration matrix. 
Both matrices lie in the
cone of $n \times n$ positive definite matrices.
We consider multivariate Gaussian models as a set of concentration matrices in the cone, and focus on linear models that are homogeneous (i.e. if some concentration matrix is in the model, then so are its scalar multiples). The log-likelihood of observing sample covariance $S$ at a concentration matrix $K$ is
\begin{align} 
\label{eqn:ell}
\ell_S (K) = \log \det (\K) - \tr (\K S).
\end{align} 

The maximum likelihood (ML) degree counts the complex critical points of the log-likelihood function, as we vary over the Zariski closure of the model. 
 The Zariski closure of a linear model is a linear space of symmetric matrices $\Lcal$
in the space $\bbS^n$ of $n \times n$ complex symmetric matrices.
Linear spaces are well-known to have unique minimizers of the Euclidean distance function, but the same is not true of the log-likelihood. 

The maximum likelihood degree of a Gaussian linear concentration model was first studied in \cite{sturmfels2010multivariate}. For generic linear spaces $\Lcal$, the ML degree equals the degree of the reciprocal variety $\Lcal^{-1}$ (defined below). This degree is non-trivial to compute in general, and recently a connection to the space of complete quadrics has led to more tools, including a proof of its polynomiality in the ambient dimension $n$~\cite{michalek2020maximum,manivel2020}. 

However, Gaussian statistical models used in practice are seldom generic. For example, a natural family of linear concentration models are undirected Gaussian graphical models. In this setting, $\Lcal$ is defined by zeros at the entries that correspond to missing edges from a graph. Several results and conjectures for the ML degrees of special classes of graphs can be found in \cite{sturmfels2010multivariate, uhlergeoMLE}. The question of finding the ML degree of any pencil, that is, of a linear space of dimension 2, has recently been answered in \cite{pencils}, using Segre symbols\footnote{named after Corrado Segre (1863-1924).}.

In this paper we consider arbitrary linear spaces of symmetric matrices of any dimension. Our main results include several characterizations of their ML degree, in particular a formula based on line geometry (Theorem \ref{thm:MLproduct}) and a formula based on intersection theory (Theorem \ref{segre-classes-formula}). The latter is given in terms of Segre classes\footnote{named after Beniamino Segre (1903-1977).}. Section \ref{sec:hyperplanes} is devoted to the hyperplane case. When the hyperplane is defined by an annihilator matrix $A$, we prove that its ML degree equals the rank of $A$ minus one (Proposition \ref{prop:hyperplane}). A complete classification of the ML degrees for all linear spaces in $\bbS^3$ is provided in Section \ref{sec:space}. We study linear spaces with ML degree 0 in Section \ref{sec:mldeg0} and give several equivalences for this degenerate case to occur (Theorem \ref{thm:mlDegreeZero}). 

\section{Likelihood geometry} 

In this section we lay out our geometric set-up. 
We then give two approaches to compute the ML degree of a linear space, and illustrate them on an example. 

\subsection{Geometric set-up}
\label{sec:geometry}

Let $\Lcal \subseteq \bbS^n$ be 
 a linear space of symmetric matrices.
 Throughout this article, we assume that $\Lcal$ is \emph{regular}, i.e. that it contains at least one matrix of full rank.
 We consider the inner product on the real points in $\bbS^n$
 given by $\langle K, \Sigma \rangle = \tr (K \Sigma)$. We extend the trace pairing to all complex matrices in $\bbS^n$. 
  This allows us to define the \emph{annihilator} or \emph{polar space} of $\Lcal$:
  $$ \Lcal^\perp := \{ \Sigma \in \bbS^n \mid \tr (  K \Sigma ) = 0 \text{ for all } K \in \Lcal \} . $$
The derivative of the log-likelihood $\ell_S: \Lcal \to \mathbb{R}$ in \eqref{eqn:ell} at a real point $K \in \Lcal$ is 
\begin{align*} D_\K (\ell_S) : \Lcal & \longrightarrow \RR \\
\dot{\K} & \longmapsto \langle \K^{-1} - S , \dot{\K} \rangle  .\end{align*} 
Hence, the condition for $\K$ to be a critical point is given by 
$ \K^{-1} - S \in \Lcal^\perp. $
We define the \emph{reciprocal variety} $\Lcal^{-1}$ to be the Zariski closure in $\bbS^n$ of the inverses of the invertible matrices in $\Lcal$.
Then the \emph{maximum likelihood degree}  of $\Lcal \subseteq \mathbb{S}^n$ is the number of matrices, for generic $S \in \mathbb{S}^n$, in the intersection
\begin{align}
\label{eq:MLdeg}
    \mathcal{L}^{-1} \cap \left( \mathcal{L}^\perp + S \right).
\end{align}
Given a matrix $\Sigma$ in the intersection, the corresponding critical point in $\Lcal$ is $\Sigma^{-1}$.

\begin{rem}
The reader might worry about some matrices in the intersection being singular, since only the invertible matrices in $\Lcal^{-1}$ correspond to some $\K \in \Lcal$. One might also ask whether points should be counted with multiplicity. Later in this section (in Lemmas~\ref{lemma:1} and~\ref{lemma:2}, and Proposition~\ref{prop:projection}) we see:
\begin{enumerate}
\item For generic $S$, all intersections are at invertible matrices.
\item 
For generic $S$, the points in the intersection in~\eqref{eq:MLdeg} occur without multiplicity. That is, the definition of ML degree does not depend on if we count intersection points with or without multiplicity.
\end{enumerate} 
\end{rem}

We now move the set-up to the projective space $\PP \bbS^n$. 
The varieties $\Lcal^{-1}$ and  $\Lcal^\perp$ are both defined by homogeneous polynomials. We denote their projectivizations by $L^{-1}$, $L^\perp$.
We let $\Lcal^\perp_S$ denote the span of $\mathcal{L}^\perp$ and $S$, and denote its projectivization by $L^\perp_S$.
The dimension of $L^\perp_S$ equals the codimension of $L^{-1}$. Hence $L^\perp_S$ and $L^{-1}$ meet either at $\deg(L^{-1})$ many points, counted with multiplicity, or at infinitely many points. 
We consider the projection from $L^\perp$:
\begin{align}
\begin{split}
\label{eqn:pi_map}
    \pi_{L^\perp} : \PP \bbS^n &\,\xdashrightarrow{\hspace{0.5cm} } \left\lbrace
    W \in \mathrm{Gr}(\dim L^\perp + 1,  \PP\bbS^n) \mid \, L^\perp \subset W
    \right\rbrace \cong \mathbb{P}^{\dim L}, \\
    S &\;\;\longmapsto L^{\perp}_S,
    \end{split}
\end{align}
where $\mathrm{Gr}(k,\mathbb{P}^N)$ denotes the Grassmannian of $k$-dimensional subspaces of $\mathbb{P}^N$. We will show in Proposition~\ref{prop:projection} that the ML degree is the degree of $\pi_{L^\perp} |_{L^{-1}}$, the projection map $\pi_{L^\perp}$ restricted to the reciprocal variety $L^{-1}$.

\begin{lemma}
\label{lemma:1}
Let $X \subset \PP^N$ be an irreducible variety of dimension $d$, and let 
$\pi_V: \PP^N \dashrightarrow \PP^d$ be the projection  from a linear space $V \in \mathrm{Gr}(N-d-1, \PP^N)$.
Then the generic fiber of $\pi_V |_X$ is reduced.
\end{lemma}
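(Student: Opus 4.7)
The plan is to deduce the lemma from generic smoothness, which is available because the ground field $\CC$ has characteristic zero.

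First I would check that $\pi_V|_X$ is an honest morphism on a dense open subset of $X$. The rational map $\pi_V$ is regular on $\PP^N \setminus V$, so it fails to be defined only along $X \cap V$. We may assume $X \not\subset V$ (otherwise $\pi_V|_X$ is nowhere defined and the assertion is vacuous); in that case $X \cap V$ is a proper closed subvariety of $X$, and $\pi_V|_X$ is a morphism on the dense open $X^\circ := X \setminus (X \cap V)$.

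Next I would pass to the image. Let $Y \subseteq \PP^d$ be the Zariski closure of $\pi_V(X^\circ)$. Being the image closure of an irreducible variety, $Y$ is irreducible, and restricting the codomain turns $\pi_V|_X$ into a dominant morphism $\varphi : X^\circ \to Y$ between integral $\CC$-varieties. By generic smoothness in characteristic zero (after shrinking both source and target to their smooth loci if necessary), there is a dense open $U \subseteq Y$ such that $\varphi^{-1}(U) \to U$ is a smooth morphism. Since smooth morphisms have geometrically smooth, hence reduced, fibers, the scheme-theoretic fiber of $\pi_V|_X$ over every closed point of $U$ is reduced; as the closed points of $U$ form a Zariski-dense subset of $Y$, this is the desired conclusion.

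The only care required is in setting up generic smoothness -- passing to the image and restricting to smooth loci -- which is routine, so I do not anticipate any genuine obstacle. If one prefers a more elementary route, one can instead apply a Bertini-style argument: the fibers of $\pi_V|_X$ are cut out by the linear system of $(N{-}d)$-dimensional subspaces of $\PP^N$ through $V$, and Bertini's theorem in characteristic zero guarantees that a generic member of this linear system meets $X$ in a scheme that is smooth away from the base locus $V \cap X$ and the singular locus of $X$, hence (by dimension count) in a reduced 0-dimensional scheme on the generic fiber.
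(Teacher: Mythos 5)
Your strategy via generic smoothness is viable in characteristic zero and is a genuinely different mechanism from the paper's, which argues projectively: the paper applies Terracini's lemma to the join of $X$ and $V$ to show $V + \mathbb{T}_xX = \PP^N$ for generic $x \in X$, so that the locus of smooth points where a generic $W \supset V$ could meet $X$ tangentially is a proper subvariety whose image under $\pi_V$ is not dense. However, as written your proof has one concrete gap, located exactly at the step the paper spends a sentence on. When you ``shrink the source to its smooth locus,'' the fibers of the shrunken morphism are no longer the fibers of $\pi_V|_X$: the scheme-theoretic fiber of $\pi_V|_X$ over a point $u$ is the full preimage in $X \setminus V$, including points of $\mathrm{Sing}(X)$ lying over $u$, and generic smoothness of the restriction to the smooth locus says nothing about the fiber at those discarded points, where non-reducedness can occur. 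To close the argument you must show that the generic fiber contains no singular points of $X$. In the dominant case this is precisely the paper's observation: $\dim \mathrm{Sing}(X) < d$, so $\pi_V|_{\mathrm{Sing}(X)}$ cannot dominate $\PP^d$ and generic fibers miss $\mathrm{Sing}(X)$. With that one sentence added, your generic-smoothness argument is complete and is a legitimate alternative to the paper's proof.

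Your detour through the image closure $Y$ makes this worse rather than better. In the lemma, the generic fiber is taken over the codomain $\PP^d$, so the non-dominant case is trivially the empty fiber and should simply be split off (as the paper does); in the dominant case $Y = \PP^d$ and your passage to $Y$ is vacuous. If instead you insist on fibering over a possibly lower-dimensional $Y$, the dimension count above genuinely fails: $\mathrm{Sing}(X)$ can dominate $Y$ once $\dim Y \leq \dim \mathrm{Sing}(X)$, and then no shrinking of the source rescues the deduction. (The statement over $Y$ does remain true in characteristic zero, but one needs a different tool: the scheme-theoretic generic fibre of a dominant morphism from a reduced scheme is reduced, hence geometrically reduced in characteristic zero, and geometric reducedness of fibres spreads out to a dense open, e.g.\ EGA IV, 9.7.7 --- not generic smoothness.) Your Bertini variant hides the same issue inside ``(by dimension count)'': Bertini only gives smoothness of $W \cap X$ away from $V \cap X$ and $\mathrm{Sing}(X)$, so you still need that the generic fiber avoids $\mathrm{Sing}(X) \setminus V$, which is again the non-dominance of $\pi_V|_{\mathrm{Sing}(X)}$; make that explicit and either version of your argument goes through.
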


\begin{proof}
If the restricted map $\pi_V |_X$  is not dominant, i.e. its generic fiber is empty,  the assertion is trivial.
So we assume the generic fiber is finite and non-empty.

Since the map $\pi_V$ restricted to the singular locus of $X$ is not dominant, the generic fiber of $\pi_V |_X$ does not contain singular points of $X$.
Thus, the generic fiber of $\pi_V |_X$ is \emph{not} reduced if and only if 
the generic $W \in \mathrm{Gr}(N-d, \PP^N)$ containing $V$ 
intersects $X$ non-transversely at a smooth point $x \in X$ outside of $V$
(i.e., $x \in W \cap \mathrm{Reg}(X) \setminus V$ and $W + \mathbb{T}_xX \neq \PP^N$, where $\mathbb{T}_xX \subset \PP^N$ denotes the embedded tangent space of $X$ at $x$).

Since we assumed the map $\pi_V |_X$ to be dominant, 
the join of $X$ and $V$ is the whole ambient space $\PP^N$.
By Terracini's lemma~\cite[Corollary 1.11]{aadlandsvik}, 
we have that $V + \mathbb{T}_x X = \PP^N$ for generic $x \in X$.
So $Y := \overline{\left\lbrace x \in \mathrm{Reg}(X) \setminus V \mid V + \mathbb{T}_xX \neq \PP^n \right\rbrace} \subset X$
is a proper subvariety.
Since we assumed $X$ was irreducible, we have $\dim Y < \dim X$ and $\pi_V |_Y$ is not dominant.
This means that the generic $W \in \mathrm{Gr}(N-d, \PP^N)$ containing $V$ does not pass through any point in $Y \setminus V$,
so it cannot intersect $X$ non-transversely outside of $V$.
\end{proof}

Lemma~\ref{lemma:1} implies that the fibers of the map  $\pi_{L^\perp} |_{L^{-1}}$ are generically reduced.
That is, points in the generic fiber are present without multiplicity. 

\begin{lemma}
\label{lemma:2}
The generic fiber of $\pi_{L^\perp} |_{L^{-1}}$ consists only of invertible matrices.
\end{lemma}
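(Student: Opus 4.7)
The plan is to identify the non-invertible matrices in $L^{-1}$ as a proper subvariety, and then to argue by dimension counting that their image under $\pi_{L^\perp}$ misses a dense open of the target $\PP^{\dim L}$. The generic fiber therefore avoids this subvariety entirely.

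First, I would note that $L^{-1}$ is irreducible of dimension $\dim L$: by definition it is the Zariski closure of the image of the regular locus of $\Lcal$ under the matrix inversion map, which is a rational morphism defined on an irreducible variety. Since $\Lcal$ is regular, the image is non-empty and in fact consists of invertible matrices, so
\[ Y := \{ M \in L^{-1} \mid \det(M) = 0 \} \]
is a proper closed subvariety of $L^{-1}$. In particular, $\dim Y < \dim L^{-1} = \dim L$.

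Next, I would study $\pi_{L^\perp}(Y)$ inside the target $\PP^{\dim L}$. The map $\pi_{L^\perp}$ is rational (defined away from $L^\perp$), so the constructible image of $Y$ has dimension at most $\dim Y$, and therefore its closure is a proper closed subset of $\PP^{\dim L}$. Pick any $W$ in the complementary dense open subset. The fiber over $W$ is
\[ \pi_{L^\perp}|_{L^{-1}}^{-1}(W) \;=\; L^{-1} \cap L^\perp_W, \]
and since $W \notin \pi_{L^\perp}(Y)$, this fiber is disjoint from $Y$; that is, it consists only of invertible matrices. If $\pi_{L^\perp}|_{L^{-1}}$ happens not to be dominant, the generic fiber is empty and the assertion holds vacuously.

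I do not anticipate a serious obstacle here: the argument is a pure dimension count, and the single non-trivial input — that $L^{-1}$ genuinely contains invertible matrices, so that $Y$ is a proper subvariety — is exactly the regularity hypothesis on $\Lcal$. The one mild subtlety to keep in mind is that $\pi_{L^\perp}$ is only a rational map, so one must phrase the image of $Y$ as the closure of the image of $Y \setminus (Y \cap L^\perp)$, but this does not affect the dimension bound.
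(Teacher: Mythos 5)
Your proposal is correct and takes essentially the same route as the paper: the paper's proof likewise identifies the singular matrices in the generic fiber as the fiber of $\pi_{L^\perp}$ restricted to the proper subvariety $L^{-1} \cap Z(\det)$ and concludes that this restriction is not dominant, which is precisely your dimension count made explicit. Your added remark on treating the image of $Y$ as the closure of the image of $Y \setminus (Y \cap L^\perp)$ is a fair tidying of a point the paper leaves implicit, but it does not change the argument.
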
 

\begin{proof}
We may assume that  $\pi_{L^\perp} |_{L^{-1}}$ is dominant since otherwise the assertion is trivial.
Hence, the generic fiber is the intersection
\begin{align}
\label{eq:genFiber}
( L^\perp_S \cap L^{-1} ) \setminus L^\perp  \text{ for generic } S \in \PP \bbS^n.  
\end{align} The singular matrices in the intersection are the fiber of $L^{\perp}_S$ under the restriction of $\pi_{L^\perp}$ to the locus $L^{-1} \cap Z(\det)$. The latter is a proper subvariety of $L^{-1}$ and thus $\pi_{L^\perp} |_{L^{-1} \cap Z(\det)}$ is not dominant. 
Hence the generic fiber of $\pi_{L^\perp} |_{L^{-1}}$ does not contain singular matrices.
\end{proof}

\begin{prop}
\label{prop:projection}
The ML degree of $\mathcal{L}$
is the degree of the restricted map $\pi_{L^\perp} |_{L^{-1}}$,
i.e. the generic number of intersection points of $L^\perp_S$
and $L^{-1}$ that do not lie on~$L^\perp$. 
\end{prop}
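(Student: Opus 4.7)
The plan is to relate the affine ML degree in~\eqref{eq:MLdeg} to a generic-fiber count of the projection $\pi_{L^\perp}|_{L^{-1}}$, and then invoke Lemmas~\ref{lemma:1} and~\ref{lemma:2} to clean up the count.

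First, I would establish, for generic $S$, a bijection
\begin{equation*}
\mathcal{L}^{-1} \cap (\mathcal{L}^\perp + S) \;\longleftrightarrow\; (L^{-1} \cap L^\perp_S) \setminus L^\perp.
\end{equation*}
Since $\mathcal{L}$ is a linear cone through the origin, so is $\mathcal{L}^{-1}$, and projectivization of its points is well defined. For generic $S$ we have $S \notin \mathcal{L}^\perp$, and in that case every class in $L^\perp_S \setminus L^\perp$ has a unique representative of the form $N + S$ with $N \in \mathcal{L}^\perp$: if $N_1 + S = c(N_2 + S)$ with $c \neq 0$, then $(c-1)S = N_1 - cN_2 \in \mathcal{L}^\perp$, forcing $c = 1$. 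The forward direction then assigns to $\Sigma$ in the affine intersection its projective class, and the inverse direction picks the unique representative. A small additional check shows no point of the affine intersection can itself lie in $\mathcal{L}^\perp$ for generic $S$, since $\Sigma \in \mathcal{L}^\perp$ together with $\Sigma - S \in \mathcal{L}^\perp$ would again force $S \in \mathcal{L}^\perp$.

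Second, I would identify the right-hand side as the fiber of $\pi_{L^\perp}|_{L^{-1}}$ over the point $L^\perp_S$. The map $\pi_{L^\perp}$ is defined on $\PP\bbS^n \setminus L^\perp$ and sends $[M]$ to the unique subspace $\mathrm{span}(L^\perp, M)$ containing $L^\perp$ and $M$; its set-theoretic fiber over any $W \supset L^\perp$ in the Grassmannian is $W \setminus L^\perp$. Restricting to $L^{-1}$ yields the desired identification. Because the inversion $\mathcal{L} \dashrightarrow \mathcal{L}^{-1}$ is birational on the open set of invertible matrices (guaranteed non-empty by regularity of $\mathcal{L}$), one has $\dim L^{-1} = \dim L$, which equals the dimension of the target of $\pi_{L^\perp}$; hence the degree of the restricted map coincides with the cardinality of a generic fiber.

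Third, I would apply Lemma~\ref{lemma:1} with $X = L^{-1}$ and $V = L^\perp$ to conclude that the generic fiber is reduced, so intersection multiplicities are all one. Lemma~\ref{lemma:2} ensures that every matrix in a generic fiber is invertible, so each point of $\mathcal{L}^{-1} \cap (\mathcal{L}^\perp + S)$ corresponds under inversion to an actual critical point $\Sigma^{-1} \in \mathcal{L}$. Combining the bijection, the fiber identification, and these two lemmas yields the claimed equality between the ML degree of $\mathcal{L}$ and the degree of $\pi_{L^\perp}|_{L^{-1}}$.

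The only delicate point is the non-dominant case: if $\pi_{L^\perp}|_{L^{-1}}$ fails to be dominant, its degree is zero by convention, and one must confirm the ML degree is also zero. This again follows from the bijection above, since a generically empty fiber translates to a generically empty affine intersection. I do not expect any step here to be technically hard; the main thing to get right is the cone/projectivization dictionary in the first step, since it is what legitimately transports the affine ML count into the projective intersection problem that $\pi_{L^\perp}|_{L^{-1}}$ measures.
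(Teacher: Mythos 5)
Your proposal is correct and takes essentially the same route as the paper: the paper also identifies the generic fiber of $\pi_{L^\perp}|_{L^{-1}}$ over $L^\perp_S$ with the affine intersection in~\eqref{eq:MLdeg} by lifting via the normalization that sets the coefficient of $S$ to one, and then invokes Lemma~\ref{lemma:1} for reducedness and Lemma~\ref{lemma:2} for invertibility. Your explicit bijection, the uniqueness-of-representative computation, and the treatment of the non-dominant case simply spell out details the paper leaves implicit.
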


\begin{proof}
Both domain and codomain of the map $\pi_{L^\perp} |_{L^{-1}}$ have the same dimension as $L$, so the map is generically finite. 
The degree of the map is the cardinality of the generic fiber \eqref{eq:genFiber}.
The fiber of $L^{\perp}_S$ under $\pi_{L^\perp} |_{L^{-1}}$ can be lifted to affine space by setting the coefficient of $S$ to be one. 
This affine lift is the intersection in~\eqref{eq:MLdeg}, up to multipicity and removing singular matrices.
Hence the cardinality of the fiber is equal to the ML degree, because the generic fiber is reduced, by Lemma~\ref{lemma:1}, and only contains invertible points, by Lemma~\ref{lemma:2}.
\end{proof}

We note that the degree of the projection $\pi_{L^\perp} |_{L^{-1}}$ is sometimes used as the definition of the ML degree, see~\cite[Definition 1.1]{manivel2020}, \cite[Definition 2.3]{michalek2020maximum}, and~\cite[Definition 5.4]{michalek2016exponential}.  Proposition~\ref{prop:projection} and Lemma~\ref{lemma:2} combine to show that for generic $S \in \bbS^n$, all intersection points of $\Lcal^\perp + S$ and $\Lcal^{-1}$ occur at invertible matrices.
Throughout the paper, we will make use of the following Lemma, proved in~\cite[Lemma 4.1]{pencils}.

\begin{lemma}
\label{lem:congruence}
The ML degree of a linear subspace $\Lcal \subset \bbS^n$ only depends on its congruence class under change of basis by $\GL_n$.
\end{lemma}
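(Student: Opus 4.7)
The plan is to exhibit an explicit bijection between critical points of the log-likelihood over $\Lcal$ and those over a congruent subspace $\Lcal' := g\T \Lcal g$, for any $g \in \GL_n$, and to show that this bijection is induced by a linear automorphism of the data space $\bbS^n$. This will immediately force the generic fiber cardinalities defining the two ML degrees to agree.

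Concretely, I would set $\phi : \Lcal \to \Lcal'$, $\phi(K) = g\T K g$, which is a linear isomorphism. First I would identify the transformed annihilator: using $\tr(g\T K g \cdot \Sigma) = \tr(K \cdot g\Sigma g\T)$, one gets
\[
(\Lcal')^\perp \;=\; g^{-1}\,\Lcal^\perp\, g^{-\mathsf{T}}.
\]
Next, for $K' = \phi(K)$ one has $(K')^{-1} = g^{-1} K^{-1} g^{-\mathsf{T}}$, so defining $S' := g^{-1} S g^{-\mathsf{T}}$ yields
\[
(K')^{-1} - S' \;=\; g^{-1}\bigl(K^{-1}-S\bigr)g^{-\mathsf{T}},
\]
which lies in $(\Lcal')^\perp$ if and only if $K^{-1}-S \in \Lcal^\perp$. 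By the critical point characterization in Section~\ref{sec:geometry}, this shows that $K \in \Lcal$ is a critical point of $\ell_S$ exactly when $\phi(K) \in \Lcal'$ is a critical point of $\ell_{S'}$. Equivalently, one may observe that $\ell_{S'}(\phi(K)) = \ell_S(K) + 2\log\det(g)$ differs from $\ell_S(K)$ by a constant, so $\phi$ matches critical loci.

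Finally, the map $S \mapsto S' = g^{-1} S g^{-\mathsf{T}}$ is a linear automorphism of $\bbS^n$, so generic $S$ corresponds to generic $S'$. The bijection $\phi$ between generic fibers of the projection map $\pi_{L^\perp}|_{L^{-1}}$ (and its analogue for $\Lcal'$) is thus degree-preserving, and by Proposition~\ref{prop:projection} the ML degrees coincide.

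The only subtlety I expect is bookkeeping: keeping track of whether $g$ acts on the left or right and verifying that the annihilator transforms with $g^{-1}, g^{-\mathsf{T}}$ rather than $g, g\T$, so that the critical point equation transforms cleanly. Once that identification is in place, nothing else is needed: we are not perturbing the model in any essential way, just relabeling matrices via an $\GL_n$-action that simultaneously intertwines $\Lcal \leftrightarrow \Lcal'$ and a linear automorphism of the data space $\bbS^n \ni S$.
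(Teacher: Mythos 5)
Your proposal is correct, and it takes essentially the same approach as the paper: the paper gives no argument of its own but cites \cite[Lemma 4.1]{pencils}, whose proof is precisely this equivariance computation, namely that the congruence $K \mapsto g\T K g$ together with the linear change of data $S \mapsto g^{-1} S g^{-\mathsf{T}}$ intertwines $\Lcal^\perp$, $\Lcal^{-1}$, and the critical-point condition, while genericity of $S$ is preserved because $S \mapsto g^{-1} S g^{-\mathsf{T}}$ is a linear automorphism of $\bbS^n$. Your bookkeeping of the annihilator transformation is exactly right, so nothing is missing.
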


\subsection{A first formula}

We now give a first approach to compute the ML degree of a linear space.
We recall from Proposition~\ref{prop:projection} that the
ML degree of $\Lcal$ is the number of intersection points of $L^\perp_S$
and $L^{-1}$ that do not lie on $L^\perp$. 
The following result shows that all the intersection points in $L^\perp$ are non-invertible matrices. Then Lemma~\ref{lemma:2} implies that the matrices we seek to exclude from the intersection $L^{\perp}_S \cap L^{-1}$ are exactly the non-invertible matrices.

\begin{lemma}
\label{lem:singular}
Every point in $\mathcal{L}^{-1} \cap \mathcal{L}^\perp$ is a non-invertible matrix.
\end{lemma}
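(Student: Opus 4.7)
The plan is to argue by contradiction: suppose there exists an invertible $\Sigma \in \mathcal{L}^{-1} \cap \mathcal{L}^\perp$ and derive the impossible equation $n = 0$ by pairing $\Sigma$ with its inverse against the trace form.

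The first step is to identify the invertible points of $\mathcal{L}^{-1}$. By definition, $\mathcal{L}^{-1}$ is the Zariski closure of $\phi(U)$, where $U = \{\K \in \mathcal{L} \mid \K \text{ invertible}\}$ and $\phi(\K)=\K^{-1}$. The map $\phi$ is a biregular involution on the open subset of invertible symmetric matrices, so $\phi(U)$ is locally closed in $\bbS^n$ and, being the preimage of the closed set $\mathcal{L}$ under the continuous inversion map, is closed inside the locus of invertible matrices. Hence $\phi(U)$ coincides with $\mathcal{L}^{-1} \cap \{\text{invertible matrices}\}$; that is, every invertible $\Sigma \in \mathcal{L}^{-1}$ is of the form $\Sigma = \K^{-1}$ for some invertible $\K \in \mathcal{L}$.

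The second step is the trace computation. If such an invertible $\Sigma$ also lies in $\mathcal{L}^\perp$, then by the very definition of the polar space we must have $\tr(\K \Sigma) = 0$, since $\K \in \mathcal{L}$. On the other hand, $\tr(\K \Sigma) = \tr(\K \K^{-1}) = \tr(I_n) = n$. This forces $n = 0$, contradicting that we are working in $\bbS^n$ with $n \geq 1$. Therefore every matrix in $\mathcal{L}^{-1} \cap \mathcal{L}^\perp$ must be singular.

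I do not expect any serious obstacle here; the only subtle point is the identification of the invertible locus of $\mathcal{L}^{-1}$ with $\phi(U)$, which is needed to legitimately write a point of $\mathcal{L}^{-1} \cap \mathcal{L}^\perp$ as the inverse of an element of $\mathcal{L}$ rather than only as a limit of such inverses. Once that is justified, the remainder is a one-line trace calculation.
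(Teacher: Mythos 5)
Your proof is correct and follows essentially the same route as the paper's: assume an invertible $\Sigma \in \mathcal{L}^{-1} \cap \mathcal{L}^\perp$, deduce $\Sigma^{-1} \in \mathcal{L}$, and derive the contradiction $0 = \tr(\Sigma^{-1}\Sigma) = n$. Your additional closure argument, showing that the invertible locus of $\mathcal{L}^{-1}$ coincides with the set of inverses of invertible matrices in $\mathcal{L}$ (since that set is closed inside the invertible matrices), correctly fills in a step the paper's one-line proof leaves implicit.
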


\begin{proof}
Assume that an invertible matrix $\Sigma$ is contained in $\mathcal{L}^{-1} \cap \mathcal{L}^\perp$.
This implies $\Sigma^{-1} \in \mathcal{L}$, and since $\Sigma \in \mathcal{L}^\perp$ we derive the contradiction 
$0 = \tr(\Sigma^{-1}\Sigma) = n$.
\end{proof}

\begin{prop}
\label{prop:bigint} 
Let $\{A_1, \ldots, A_c \}$ be a basis for $\Lcal^\perp \subset \bbS^n$, and define
$$ \adjHP_\Lcal =  \{ \Sigma \in \bbS^n \mid \tr(A_i \cdot \adj(\Sigma)) =0 \text{ for }  i = 1, \ldots, c \} .$$
Then the ML degree of $\Lcal$ is the number of invertible matrices in the intersection
$ \adjHP_\Lcal \cap ( \Lcal^\perp + S) $ for generic $S \in \bbS^n$. 
\end{prop}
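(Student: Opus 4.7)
The plan is to reduce the statement to Proposition~\ref{prop:projection} by identifying the invertible loci of $\Lcal^{-1}$ and $\adjHP_\Lcal$.

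First I would combine Proposition~\ref{prop:projection} with Lemmas~\ref{lemma:2} and~\ref{lem:singular} to rephrase the ML degree of $\Lcal$ as the number of invertible matrices in the affine intersection $\Lcal^{-1} \cap (\Lcal^\perp + S)$ for generic $S$. This is essentially the content of the paragraph immediately preceding the proposition: Proposition~\ref{prop:projection} says the ML degree counts the points of $L^\perp_S \cap L^{-1}$ off $L^\perp$, Lemma~\ref{lem:singular} says that the excluded points on $L^\perp$ are precisely the non-invertible ones, and Lemma~\ref{lemma:2} ensures that for generic $S$ we are indeed counting honest intersection points without multiplicity.

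The core step is then to show that an invertible matrix $\Sigma \in \bbS^n$ lies in $\Lcal^{-1}$ if and only if it lies in $\adjHP_\Lcal$. Using the classical identity $\adj(\Sigma) = \det(\Sigma)\, \Sigma^{-1}$ on the invertible locus, we compute
$$
\tr(A_i \cdot \adj(\Sigma)) \;=\; \det(\Sigma)\, \tr(A_i \Sigma^{-1}),
$$
and since $\det(\Sigma) \neq 0$ we may divide through. Hence all $c$ of these traces vanish if and only if $\Sigma^{-1}$ is annihilated by every element of the basis $\{A_1,\dots,A_c\}$ of $\Lcal^\perp$, i.e. $\Sigma^{-1} \in (\Lcal^\perp)^\perp = \Lcal$. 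For invertible $\Sigma$, this condition is equivalent to $\Sigma \in \Lcal^{-1}$, because the set $\{K^{-1} : K \in \Lcal \text{ invertible}\}$ is already Zariski-closed inside the open locus of invertible symmetric matrices — it is cut out there by pulling back the linear equations defining $\Lcal$ along the regular inverse map — so intersecting its Zariski closure $\Lcal^{-1}$ with the invertible locus recovers exactly this set.

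Putting the two pieces together, the invertible matrices in $\Lcal^{-1} \cap (\Lcal^\perp + S)$ coincide with the invertible matrices in $\adjHP_\Lcal \cap (\Lcal^\perp + S)$, and the claimed formula for the ML degree follows. The only nontrivial point is the last claim about Zariski closures, i.e. that no extra invertible points appear in $\Lcal^{-1}$ when passing to the closure; the remainder of the argument is the direct manipulation of the adjugate identity together with the duality $(\Lcal^\perp)^\perp = \Lcal$.
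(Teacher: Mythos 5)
Your proposal is correct and takes essentially the same route as the paper: both arguments hinge on the identity $\tr(A_i \cdot \adj(\Sigma)) = \det(\Sigma)\,\tr(A_i \Sigma^{-1})$ for invertible $\Sigma$ to show that $\adjHP_\Lcal$ and $\Lcal^{-1}$ have the same invertible points, and then use Lemma~\ref{lemma:2} to conclude that for generic $S$ these account for all points of the intersection. The only cosmetic difference is that you justify the claim that invertible points of $\Lcal^{-1}$ satisfy $\Sigma^{-1} \in \Lcal$ via an explicit Zariski-closure argument on the invertible locus, whereas the paper obtains the same fact from the inclusion $\Lcal^{-1} \subseteq \adjHP_\Lcal$ together with the same trace computation; both justifications are valid.
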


\begin{proof}
We have  $\Lcal^{-1} \subseteq \adjHP_\Lcal$, since $\tr ( A_i \cdot \adj(\K^{-1}))=0$ for $\K \in \Lcal$, but the inclusion may be strict.
However, all points in the difference $\adjHP_\Lcal \setminus \Lcal^{-1}$ are non-invertible matrices. Indeed, if we have some invertible $\Sigma \in \adjHP_\Lcal$, then the defining equations of $\adjHP_\Lcal$ imply that $\tr ( A_i \cdot \Sigma^{-1})$ vanishes, i.e. $\Sigma^{-1} \in \Lcal$. 
For generic $S$, none of the critical points of the likelihood occur at singular matrices, by Lemma~\ref{lemma:2}, hence this includes all critical points of the likelihood. 
\end{proof}

The inclusion $\Lcal^{-1} \subseteq \adjHP_\Lcal$ will always be strict if the dimension of $\Lcal$ is small enough. This is because all defining equations of $\adjHP_\Lcal$ vanish when $\rk(\Sigma) \leq n-2$. 
Hence if the corank-two matrices are not in $\Lcal^{-1}$, then these lie in the difference $\adjHP_\Lcal \setminus \Lcal^{-1}$.
The advantage of the larger intersection in Proposition~\ref{prop:bigint} over the smaller intersection in~\eqref{eq:MLdeg} is that we have defining equations for both sides. 
This enables us to obtain the following.

\begin{prop}
\label{prop:firstapproach}
Let $\{A_1, \ldots, A_c \}$ be a basis for $\Lcal^\perp \subset \bbS^n$.
Then the ML degree of $\Lcal$ is the number of invertible matrices $\sum_i t_i A_i + S$ that are critical points of $\ell(t_1, \ldots, t_c) := \det(\sum_i t_i A_i + S) $.
 \end{prop}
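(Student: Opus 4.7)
The plan is to apply Proposition~\ref{prop:bigint} directly, by identifying the condition $\Sigma \in \adjHP_\Lcal \cap (\Lcal^\perp + S)$ with the condition that $\Sigma$ corresponds to a critical point of $\ell$. Parametrize the affine subspace $\Lcal^\perp + S$ by the map $\Phi \colon \mathbb{C}^c \to \bbS^n$, $(t_1,\dots,t_c) \mapsto \Sigma(t) := \sum_i t_i A_i + S$. Since $\{A_1,\ldots,A_c\}$ is a basis of $\Lcal^\perp$, this map is a bijection onto $\Lcal^\perp + S$, so to prove the proposition it suffices to show that $\Sigma(t) \in \adjHP_\Lcal$ holds if and only if $t$ is a critical point of $\ell$.

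The key step is the computation of $\nabla \ell$. By Jacobi's formula, for any smooth matrix-valued curve $M(\tau)$ one has $\tfrac{d}{d\tau}\det(M(\tau)) = \tr(\adj(M(\tau))\, M'(\tau))$; applying this to the curve $\tau \mapsto \Sigma(t + \tau e_j)$, whose derivative at $\tau=0$ is $A_j$, gives
\[
\frac{\partial \ell}{\partial t_j}(t) \;=\; \tr\!\bigl(\adj(\Sigma(t))\cdot A_j\bigr) \;=\; \tr\!\bigl(A_j \cdot \adj(\Sigma(t))\bigr).
\]
Hence $t$ is a critical point of $\ell$ if and only if $\tr(A_j \cdot \adj(\Sigma(t))) = 0$ for every $j=1,\dots,c$, which is exactly the defining condition for $\Sigma(t)$ to lie in $\adjHP_\Lcal$.

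Combining the two observations, the invertible critical points of $\ell$ are in bijection (via $\Phi$) with the invertible matrices in $\adjHP_\Lcal \cap (\Lcal^\perp + S)$. By Proposition~\ref{prop:bigint} this number equals the ML degree of $\Lcal$ for generic $S$, which completes the proof.

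There is no real obstacle here; the only points requiring a moment of care are (i) using Jacobi's formula for $\det$ on the ambient space of all matrices and then restricting to the symmetric subspace via the chain rule (legitimate since $\Sigma(t)$ stays symmetric by construction), and (ii) noting that the bijection $\Phi$ identifies critical points on the source with critical points of the restriction of $\det$ to $\Lcal^\perp + S$, so no spurious critical points appear or disappear under the parametrization.
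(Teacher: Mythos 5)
Your proof is correct and follows essentially the same route as the paper: the paper's proof consists precisely of the identity $\tr\bigl(A_i \cdot \adj\bigl(\sum_j t_j A_j + S\bigr)\bigr) = \frac{d}{dt_i}\det\bigl(\sum_j t_j A_j + S\bigr)$ followed by an appeal to Proposition~\ref{prop:bigint}, which is exactly your argument with the Jacobi-formula derivation and the parametrization $\Phi$ made explicit. The extra care you take (chain rule on the symmetric subspace, bijectivity of the parametrization) is sound but was left implicit in the paper.
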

 
 \begin{proof}  We have the identity
 $ \tr \left( A_i \cdot \adj \left( \sum_j t_j A_j + S\right) \right) = \frac{d}{dt_i} \left( \det(\sum_j t_j A_j + S) \right) .$
 So the invertible critical points count the invertible matrices in the intersection of $\adjHP_\Lcal$ and $\Lcal^\perp + S$. Then we conclude using Proposition~\ref{prop:bigint}.
 \end{proof} 
 
We note that this proposition has a natural connection to the problem of maximizing the determinant along a spectrahedron, which computes the MLE in the real setting, see \cite{sturmfels2010multivariate}. 
 
 \begin{exa}
 \label{ex:running_example}
Let
 $
 \Lcal = \{ \K \in \bbS^3 \mid \kappa_{11} = \kappa_{22} = 0 \}$.
 A basis of $\Lcal^\perp$ is given by
  $A_1 = \left[\begin{smallmatrix} 1&0&0 \\ 0&0&0 \\ 0&0&0 \end{smallmatrix}\right]$ and   
 $A_2 = \left[\begin{smallmatrix} 0&0&0 \\ 0&1&0 \\ 0&0&0 \end{smallmatrix}\right] . 
 $
 The ML degree of $\Lcal$ is the number of invertible matrices $S + t_1 A_1 + t_2 A_2$ that are critical points of $ \ell(t_1, t_2) = \det(S + t_1 A_1 + t_2 A_2) =
s_{33}t_1t_2 +t_1(s_{22}s_{33} - s_{23}^2) + t_2(s_{11}s_{33} - s_{13}^2) + \det(S)
$. We obtain the conditions
$$
s_{33}t_2 + (s_{22}s_{33} - s_{23}^2) = 0 \qquad 
s_{33}t_1 + (s_{11}s_{33} - s_{13}^2) = 0.
$$
This system has a unique solution for $(t_1, t_2)$ for generic $S$. The last step is to verify that the critical point is at an invertible matrix. 
We substitute  our expressions for $t_1$ and $t_2$ into $s_{33}\ell(t_1,t_2)$ and obtain for generic $S$ the expression
$
s_{33}\det(S) -(s_{22}s_{33} - s_{23}^2)(s_{11}s_{33} - s_{13}^2) \neq 0.
$
Hence the model has ML degree 1. 
\end{exa}

\subsection{A line geometry formula}
\label{sec:2.3}
In this subsection we give a formula for the ML degree of $\Lcal$ based on the Grassmannian $\mathrm{Gr}(1, \PP \mathbb{S}^n)$.

\begin{lemma}
\label{lem:lines_and_param} 
Let $\Lcal \subset \bbS^n$ be a linear subspace, and fix $S \in \PP\bbS^n$ generic.
The ML degree of $\Lcal$ is the number of pairs $(\Sigma,\Gamma) \in L^{-1} \times L^\perp$ such that $\Sigma \neq \Gamma$ and $S$ is on the line $\ell \in \mathrm{Gr}(1, \PP \mathbb{S}^n)$ spanned by $\Sigma$ and $\Gamma$. 
\end{lemma}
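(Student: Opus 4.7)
The plan is to invoke Proposition~\ref{prop:projection}, which expresses the ML degree of $\Lcal$ as the number of points in $L^{-1} \cap L^\perp_S$ that do not lie on $L^\perp$, and then to establish a bijection between these intersection points and the pairs $(\Sigma, \Gamma)$ described in the statement.

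For the forward direction, I would take $\Sigma \in L^{-1} \cap L^\perp_S$ with $\Sigma \notin L^\perp$. Since $L^\perp_S$ is the projective span of $L^\perp$ and $S$, a representative of $\Sigma$ can be written as $\alpha \cdot S + M$ with $\alpha \in \mathbb{C}$ and $M \in \Lcal^\perp$. The condition $\Sigma \notin L^\perp$ forces $\alpha \neq 0$, since otherwise $\Sigma = M \in \Lcal^\perp$. For generic $S$ we also have $S \notin \Lcal^{-1}$, which rules out $M = 0$ (else $\Sigma$ would be a scalar multiple of $S$). Setting $\Gamma := [M] \in L^\perp$, the equation $\alpha S = \Sigma - M$ exhibits $S$ as a non-trivial linear combination of $\Sigma$ and $\Gamma$, so $S$ lies on the projective line they span; moreover $\Sigma \neq \Gamma$ because $\Sigma \notin L^\perp$ while $\Gamma \in L^\perp$. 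Uniqueness of $\Gamma$ is immediate: the line through $\Sigma$ and $S$ is not contained in $L^\perp$, hence meets the projective linear subspace $L^\perp$ in at most one point.

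For the reverse direction, given a pair $(\Sigma, \Gamma) \in L^{-1} \times L^\perp$ with $\Sigma \neq \Gamma$ and $S$ on the line they span, one has $\Sigma \in \mathrm{span}(S, \Gamma) \subseteq L^\perp_S$, so $\Sigma$ is an intersection point of $L^{-1}$ and $L^\perp_S$. It remains to verify $\Sigma \notin L^\perp$: otherwise both $\Sigma$ and $\Gamma$ would lie in $L^\perp$, which would place the whole line (and hence $S$) inside $L^\perp$, contradicting $S \notin L^\perp$ for generic $S$. Combining this inverse correspondence with the count from Proposition~\ref{prop:projection} will finish the proof.

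The substantive content is slim; the only real subtleties are the careful projective-versus-affine bookkeeping when writing $\Sigma = \alpha S + M$, and in checking that the three genericity assumptions used on $S$, namely $S \notin L^\perp$, $S \notin L^{-1}$, and the genericity already present in Proposition~\ref{prop:projection}, jointly exclude every degenerate configuration in which the correspondence could fail.
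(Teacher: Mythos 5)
Your proposal is correct and follows essentially the same route as the paper: both reduce to Proposition~\ref{prop:projection} and identify each intersection point $\Sigma \in (L^{-1} \cap L^\perp_S) \setminus L^\perp$ with a pair $(\Sigma,\Gamma)$ by writing $\Sigma$ as a linear combination of $S$ and some $\Gamma \in L^\perp$ with non-zero coefficient on $S$. Your explicit verification that $\Gamma$ is unique (the line through $\Sigma$ and $S$ is not contained in $L^\perp$, so it meets that linear space in at most one point) is a bookkeeping detail the paper leaves implicit in its closing ``this is equivalent to the assertion.''
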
 

\begin{proof}
The ML degree of $\Lcal$ is the number of matrices in the intersection of $L^{-1} \cap L^\perp_S$ that are not in $L^\perp$, see Proposition~\ref{prop:projection}.
A point $\Sigma$ in the intersection $L^{-1} \cap L_S^\perp$, but not in $L^\perp$, can be written as a linear combination of some $\Gamma \in L^\perp$ and $S$, where the coefficient of $S$ is non-zero. That is, the point $\Sigma$ is on a line $\ell \in \mathrm{Gr}(1, \PP \mathbb{S}^n)$ spanned by $\Gamma \neq \Sigma$ and $S$. 
Hence the ML degree counts the $\Sigma \in L^{-1}$ that lie on a line spanned by $S$ and some $\Gamma \in L^\perp$ distinct from $\Sigma$. 
This is equivalent to the assertion.
\end{proof}

\begin{figure}[h]
    \centering
    \includegraphics[width=6cm]{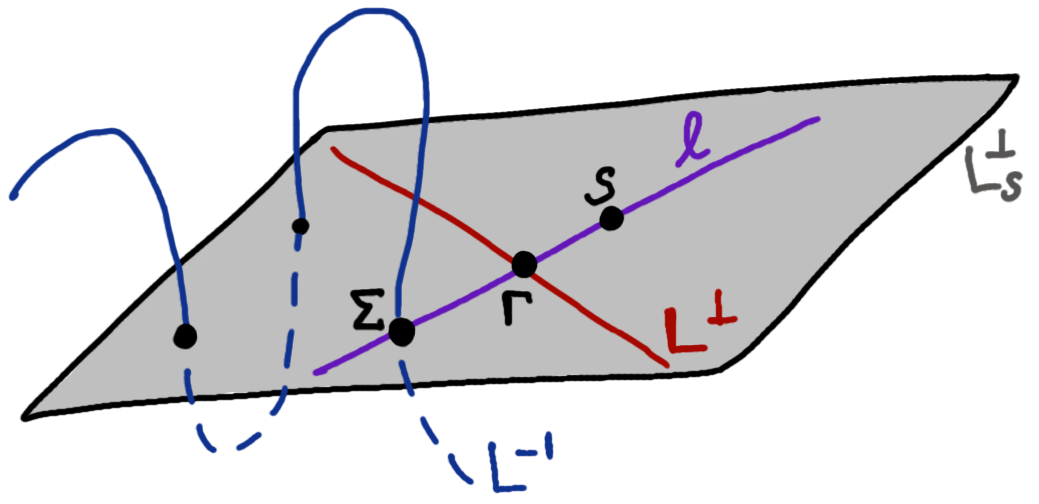}
    \caption{A diagram to show the construction of lines $\ell$ spanned by $(\Sigma,\Gamma) \in L^{-1} \times L^\perp$, containing some $S \in \PP\bbS^n$. The ML degree of $\Lcal$ is the total number of intersections of $L^{-1}$ with the linear space $L_S^\perp$.}
    \label{fig:linesMN}
\end{figure}

We consider the 
Schubert variety of lines passing through $S$:
$$ \mathcal{G}_S := \left\lbrace \ell \in \mathrm{Gr}(1, \PP \mathbb{S}^n) \mid S \in \ell \right\rbrace . $$
We are interested in lines $\ell \in \mathcal{G}_S$ that are spanned by $\Sigma \in L^{-1}$ and $\Gamma \in L^\perp$, by Lemma~\ref{lem:lines_and_param}. For this, we introduce the variety $\mathcal{J}$ in $\mathrm{Gr}(1, \PP \mathbb{S}^n)$:
\begin{align*}
    \mathcal{J} := \overline{\left\lbrace
\ell \in \mathrm{Gr}(1, \PP \mathbb{S}^n) \mid \exists (\Sigma , \Gamma) \in L^{-1} \times L^\perp: \Sigma \neq \Gamma, \Sigma \in \ell, \Gamma \in \ell
\right\rbrace}.
\end{align*}
Note that the union of the lines $\ell$ in $\mathcal{J}$ is the join of $L^{-1}$ and $L^\perp$ in $\PP \bbS^n$.

\begin{thm}
\label{thm:MLproduct}
Let $\Lcal \subset  \bbS^n$ be a linear subspace of codimension at least two, and
let $S \in \PP \mathbb{S}^n$ be generic.
Then the ML degree of $\mathcal{L}$ is
$|\mathcal{J} \cap \mathcal{G}_S|$. 
\end{thm}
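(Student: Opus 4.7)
The plan is to apply Lemma~\ref{lem:lines_and_param} to replace the ML degree with the count of pairs $(\Sigma, \Gamma) \in L^{-1} \times L^\perp$ with $\Sigma \neq \Gamma$ and $S \in \overline{\Sigma\Gamma}$, and then match these pairs bijectively with the lines in $\mathcal{J} \cap \mathcal{G}_S$. Each pair yields a line in $\mathcal{J} \cap \mathcal{G}_S$, and every line in $\mathcal{J} \cap \mathcal{G}_S$ is spanned by some pair (by definition of $\mathcal{J}$). The core of the proof is therefore to show uniqueness of the spanning pair for a generic $S$. Write $c := \mathrm{codim}\,\Lcal \geq 2$.

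I will show this uniqueness by establishing that the map
\[
\phi : (L^{-1} \times L^\perp) \setminus \Delta \dashrightarrow \mathrm{Gr}(1, \PP\bbS^n), \quad (\Sigma, \Gamma) \mapsto \overline{\Sigma\Gamma},
\]
is birational onto its image $\mathcal{J}$. The $\Gamma$-component is recovered easily: a generic $\Sigma \in L^{-1}$ is invertible, hence by Lemma~\ref{lem:singular} not in $L^\perp$, so $\overline{\Sigma\Gamma}$ is not contained in the linear subspace $L^\perp$ and meets it only at $\Gamma$. To recover $\Sigma$, I would fix a generic $\Sigma \in L^{-1}$ and consider the join $\join(\Sigma, L^{-1}) \subset \PP\bbS^n$ of all lines through $\Sigma$ and a point of $L^{-1}$; generically this has dimension $\dim \Lcal$. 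A point $\Gamma \in L^\perp$ is bad (meaning $\overline{\Sigma\Gamma}$ meets $L^{-1}$ outside of $\Sigma$) exactly when $\Gamma \in \join(\Sigma, L^{-1})$. Since $\dim \PP\bbS^n = \dim \Lcal + c - 1$, the expected dimension of $\join(\Sigma, L^{-1}) \cap L^\perp$ is $\dim \Lcal + (c - 1) - (\dim \Lcal + c - 1) = 0$, strictly less than $\dim L^\perp = c - 1$ when $c \geq 2$. A generic $\Gamma$ is thus good, and $\phi$ is birational.

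To conclude, I would transfer this birationality from a ``generic pair'' to a ``generic $S$''. The locus $Y \subset \mathcal{J}$ where $\phi^{-1}$ has more than one point is the image of the bad-pair locus in $L^{-1} \times L^\perp$, so by the previous paragraph $\dim Y \leq \dim \Lcal - 1$. Consequently the union of lines in $Y$ inside $\PP\bbS^n$ has dimension at most $\dim \Lcal$, strictly less than $\dim \PP\bbS^n = \dim \Lcal + c - 1$ since $c \geq 2$. A generic $S$ therefore lies on no line in $Y$, so every line in $\mathcal{J} \cap \mathcal{G}_S$ has a unique spanning pair, giving the desired equality. The main technical obstacle is to rigorously justify the expected dimension for $\join(\Sigma, L^{-1}) \cap L^\perp$, which could fail if $L^{-1}$ is arranged pathologically relative to $L^\perp$; this would require a more refined dimension argument in the degenerate cases.
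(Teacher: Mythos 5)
Your overall architecture---reduce to counting spanning pairs via Lemma~\ref{lem:lines_and_param}, show the spanning map is birational onto $\mathcal{J}$, then transfer to generic $S$ by bounding the union of ``bad'' lines---is the same as the paper's (your $\phi$ is essentially the map $\gamma$ of~\eqref{eqn:gamma}). But the step you flag as the main technical obstacle is a genuine gap, not a formality. The expected dimension $\dim X + \dim Y - N$ is only a \emph{lower} bound on the dimension of an intersection in $\PP^N$; since $\join(\Sigma, L^{-1})$ and $L^\perp$ are both rigidly determined by $\Lcal$ (nothing here is in general position), there is no matching upper bound, and your claim actually fails within the hypotheses of the theorem. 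Take the paper's final example in $\bbS^4$: $L^\perp$ is a tangent line to the rank-one locus (codimension two), and $L^{-1}$ is a cubic cone whose vertex set contains $L^\perp$. Then $L^\perp \subseteq L^{-1} \subseteq \join(\Sigma, L^{-1})$ for \emph{every} $\Sigma \in L^{-1}$, so every $\Gamma \in L^\perp$ is bad, $\phi$ has infinite fibers everywhere, $Y = \mathcal{J}$, and your bound $\dim Y \leq \dim \Lcal - 1$ collapses. The theorem still holds there (both counts are $0$), but your pipeline cannot see why.

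The repair is exactly the case split your proposal omits, which is how the paper proceeds. First dispose of ML degree zero: by Lemma~\ref{lem:lines_and_param}, the ML degree is zero if and only if no line through a generic $S$ is spanned by a pair, which happens if and only if $\mathcal{J} \cap \mathcal{G}_S = \emptyset$, so the asserted equality reads $0 = 0$. Then, assuming nonzero ML degree, the birationality of $\phi$ is derived not from an expected-dimension count but from dominance of $\pi_{L^\perp}|_{L^{-1}}$ (this is Lemma~\ref{lem:degGammaIsOne}): a generic line $\ell \in \mathcal{J}$ spans with $L^\perp$ a space $L^\perp_S$ for generic $S$, which meets $L^{-1}$ away from $L^\perp$ in finitely many points by Proposition~\ref{prop:projection}; since $\dim L^\perp \geq 1$ (this is where codimension $\geq 2$ enters), one can perturb $\Gamma$ inside $L^\perp$, keeping $\Sigma$ on the line while sliding it off the remaining finitely many points. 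That perturbation argument is the rigorous substitute for your ``generic $\Gamma$ is good'' step. One further small point: a line of $\mathcal{J} \cap \mathcal{G}_S$ is not spanned by a pair ``by definition of $\mathcal{J}$,'' since $\mathcal{J}$ is a closure; you must also argue that a generic $S$ avoids the union of the lines in $\mathcal{J} \setminus \mathrm{im}(\phi)$, which your dimension count on unions of lines does supply once the bad-locus bound is in place.
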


We prove Theorem~\ref{thm:MLproduct} by first counting the number of parametrizations of a general line $\ell \in \mathcal{J}$. 
This number is the degree of the following projection. 

\begin{align}
\begin{split}
\label{eqn:gamma} 
    \gamma: 
    \overline{{\left\lbrace
    (\Sigma,\Gamma,\ell) \in L^{-1} \times L^\perp \times \mathrm{Gr}(1, \PP \mathbb{S}^n)
    \mid \Sigma \neq \Gamma, \; \Sigma \in \ell, \; \Gamma \in \ell
    \right\rbrace}} &\longrightarrow \mathcal{J} 
    ,\\
    (\Sigma,\Gamma,\ell) & \longmapsto \ell.
    \end{split} 
\end{align}

\begin{lemma}
\label{lem:degGammaIsOne}
Let $\Lcal \subset \bbS^n$ have codimension at least two and non-zero ML degree. Then $\gamma$ is a birational map. 
\end{lemma}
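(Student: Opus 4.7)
The plan is to show the generic fiber of $\gamma$ consists of a single point. Writing $N := \dim \PP \bbS^n$ and $k := \dim \Lcal$, the domain of $\gamma$ is birational to $L^{-1} \times L^\perp$ and has dimension $(k-1) + (N-k) = N-1$. The hypothesis that $\Lcal$ has non-zero ML degree means the join $\overline{\bigcup_{\ell \in \mathcal{J}} \ell}$ equals $\PP \bbS^n$, which forces $\dim \mathcal{J} = N-1$ and makes $\gamma$ generically finite. The fiber of $\gamma$ over $\ell \in \mathcal{J}$ is
\[ \gamma^{-1}(\ell) = \{(\Sigma,\Gamma) : \Sigma \in \ell \cap L^{-1},\ \Gamma \in \ell \cap L^\perp,\ \Sigma \neq \Gamma\}, \]
so I treat the two intersection factors separately.

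First I would handle the $L^\perp$ factor. A generic $\ell \in \mathcal{J}$ contains a generic, hence invertible, point $\Sigma \in L^{-1}$ (by regularity of $\Lcal$). Lemma~\ref{lem:singular} then gives $\Sigma \notin L^\perp$, so $\ell \not\subset L^\perp$; since $L^\perp$ is linear, the intersection $\ell \cap L^\perp$ is a single point.

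The main step is the $L^{-1}$ factor: I must show that for generic $(\Sigma,\Gamma) \in L^{-1} \times L^\perp$, the line $\overline{\Sigma\Gamma}$ meets $L^{-1}$ only at $\Sigma$. Equivalently, the \emph{bad} locus
\[ B := \{(\Sigma,\Gamma) \in L^{-1} \times L^\perp : \overline{\Sigma\Gamma} \cap L^{-1} \supsetneq \{\Sigma\}\} \]
is a proper subvariety of $L^{-1} \times L^\perp$. I would bound $\dim B$ by lifting to the incidence
\[ \widetilde{B} := \{(\Sigma,\Sigma',\Gamma) : \Sigma, \Sigma' \in L^{-1},\ \Sigma \neq \Sigma',\ \Gamma \in \overline{\Sigma\Sigma'} \cap L^\perp\} \]
and projecting to $L^{-1} \times L^{-1}$. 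The image consists of pairs $(\Sigma,\Sigma')$ whose spanning line meets $L^\perp$; in the Grassmannian $\mathrm{Gr}(1,\PP \bbS^n)$ the latter is a Schubert condition of codimension $k-1$ (since $L^\perp$ has codimension $k$), so the image has dimension at most $2(k-1) - (k-1) = k-1$, and the generically singleton fibers $\overline{\Sigma\Sigma'} \cap L^\perp$ give $\dim \widetilde{B} \leq k-1$, hence $\dim B \leq k-1$. By the codimension-$\geq 2$ hypothesis $k \leq N-1$, this yields $\dim B \leq k-1 < N-1 = \dim L^{-1} \times L^\perp$, so $B$ is proper.

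The main obstacle is justifying that the Schubert condition pulls back with the expected codimension, that is, that not every secant of $L^{-1}$ meets $L^\perp$, so the codimension $k-1$ in the Grassmannian really becomes codimension $k-1$ in $L^{-1}\times L^{-1}$; this rules out dimensional degeneracies of the secant variety and would use the non-zero ML degree hypothesis. Once this dimensional bound is in hand, a generic $(\Sigma,\Gamma)$ lies outside $B$, so $|\overline{\Sigma\Gamma} \cap L^{-1}| = 1$; combined with the $L^\perp$ step, the generic fiber of $\gamma$ has size $1$, and $\gamma$ is birational.
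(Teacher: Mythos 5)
There is a genuine gap in your main step, and in fact the key dimension claim you flag as the ``main obstacle'' is \emph{false} as stated, not merely unproven. The counterexample is the paper's own running example $\Lcal = \{ \K \in \bbS^3 \mid \kappa_{11} = \kappa_{22} = 0\}$ (codimension two, ML degree one, so the lemma applies): there the line $L^\perp$ is contained in the singular plane of $L^{-1}$, hence $L^\perp \subset L^{-1}$. Two things break. First, your bad locus $B$ is wrongly defined: since $\Gamma \in L^\perp \subset L^{-1}$, \emph{every} pair $(\Sigma,\Gamma)$ satisfies $\overline{\Sigma\Gamma} \cap L^{-1} \supsetneq \{\Sigma\}$, so $B = L^{-1} \times L^\perp$ is not proper, yet $\gamma$ is still birational. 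The reason is that the extra intersection point $\Gamma$ does not produce a second element of the fiber: since $\ell \cap L^\perp$ is the single point $\Gamma$, the only candidate pair it yields is $(\Gamma,\Gamma)$, which is excluded by $\Sigma \neq \Gamma$. The correct bad locus must only forbid intersection points of $\ell$ with $L^{-1}$ outside $\{\Sigma,\Gamma\}$ (equivalently, outside $\Sigma$ and away from $L^\perp$). Second, the same example kills the expected-codimension claim: the pairs $(\Sigma,\Sigma')$ with $\Sigma' \in L^\perp \subset L^{-1}$ form a locus of secants meeting $L^\perp$ of dimension $(k-1) + (N-k) = N-1 = 4$, strictly larger than your bound $k-1 = 3$. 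Since $L^\perp$ is determined by $\Lcal$ rather than generic, there is no transversality available, and the weaker statement you propose to prove (``not every secant of $L^{-1}$ meets $L^\perp$'') only gives codimension at least one, i.e.\ image dimension at most $2k-3$, which exceeds the needed bound $N-2$ whenever $2k-3 \geq N-1$ --- in particular for every codimension-two model with $N \geq 4$.

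The paper sidesteps any global dimension count of the secant-type locus with a local perturbation argument, which you may want to adopt. For a generic line $\ell \in \mathcal{J}$ spanned by $(\Sigma,\Gamma)$, the span of $\ell$ and $L^\perp$ is $L^\perp_S$ for a generic $S$, and by Proposition~\ref{prop:projection} the set $L^{-1} \cap (L^\perp_S \setminus L^\perp)$ is finite, of cardinality the ML degree; note this finite set automatically excludes the troublesome points of $L^{-1} \cap L^\perp$, which is exactly the correction your $B$ needs. If $\ell$ passed through a second such point $\Sigma'$, one perturbs $\Gamma$ inside $L^\perp$ --- this is where the codimension-$\geq 2$ hypothesis enters, guaranteeing $\dim L^\perp \geq 1$ --- keeping $\Sigma$ fixed: each of the finitely many other points $\Sigma'$ rules out at most the single point $\overline{\Sigma\Sigma'} \cap L^\perp$, so a generic $\Gamma' \in L^\perp$ yields a line through $\Sigma$ missing all of them. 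Your first two steps (generic finiteness of $\gamma$ from the join filling $\PP\bbS^n$, and $|\ell \cap L^\perp| = 1$ via Lemma~\ref{lem:singular}) are correct and match the paper; it is the incidence-variety dimension count that must be replaced.
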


\begin{proof}
Since the ML degree is non-zero, a general $S \in \PP\bbS^n$ lies on a line spanned by some $\Sigma \in L^{-1}$ and $\Gamma \in L^\perp$. That is, the join of $L^{-1}$ and $L^\perp$ fills the ambient space $\PP \bbS^n$, see also Theorem~\ref{thm:mlDegreeZero}(iii). In particular, $L^{-1}$ is not a cone over $L^\perp$ (otherwise their join would be $L^{-1}$), so the map $\gamma$ is generically finite. Since $L^\perp$ is a linear space, the degree of $\gamma$ counts the intersections of a generic line $\ell \in \mathcal{J}$ with $L^{-1}$.

We consider a generic line $\ell \in \mathcal{J}$.
The span of the line with $L^\perp$ gives the linear space $L^\perp_S$ where $S$ is a generic point.
Since the codimension of the model $\Lcal$ is at least two, the linear space $L^\perp_S$ strictly contains $\ell$.
The reciprocal variety $L^{-1}$ intersects $L^\perp_S \setminus L^\perp$ at finitely many points, where their number is the ML degree of $\Lcal$, by Proposition~\ref{prop:projection}.
Since the line $\ell$ is generic, it  passes through exactly one of the points.
Otherwise, if $\ell$ is spanned by $\Sigma \in L^{-1}$ and $\Gamma \in L^\perp$, we can perturb the point $\Gamma$ on $L^\perp$ (which has positive dimension by our assumption on the codimension of $\Lcal$) to obtain a new line that only meets $L^{-1}$ at $\Sigma$.
\end{proof}

\begin{proof}[Proof of Theorem~\ref{thm:MLproduct}.]
We saw in Lemma~\ref{lem:lines_and_param} that the ML degree is, for gene\-ric~$S$, the number of pairs $(\Sigma,\Gamma) \in L^{-1} \times L^\perp$ with $\Sigma \neq \Gamma$ and $S$ on the line spanned by $\Sigma$ and $\Gamma$.
Hence, the ML degree is zero if and only if $\mathcal{J} \cap \mathcal{G}_S = \emptyset$. 

It remains to consider those $\Lcal$ with non-zero ML degree. 
In particular, $L^{-1}$ is not a cone over $L^\perp$. This implies that the variety $\mathcal{J}$ has both dimension and codimension \mbox{$\dim \PP \mathbb{S}^n-1$} inside the Grassmannian $\mathrm{Gr}(1, \PP \mathbb{S}^n)$.
The Schubert variety $\mathcal{G}_S$ has the same (co)dimension. 
Hence the intersection $\mathcal{J} \cap \mathcal{G}_S$ is finite for generic $S$, for instance as a consequence of~\cite[Theorem~1.7]{eisenbud20163264}.
A line $\ell$ in this intersection is spanned by a unique pair $(\Sigma, \Gamma) \in L^{-1} \times L^\perp$ due to Lemma~\ref{lem:degGammaIsOne} and the genericity of $S$. Hence, the assertion follows from Lemma~\ref{lem:lines_and_param}.
\end{proof}

In the next section we discuss hyperplanes, where we see that the degree of the projection~$\gamma$ can exceed one, and in fact  $\deg(\gamma)$ equals the ML degree. We conclude this section by revisiting Example~\ref{ex:running_example}; we use Theorem~\ref{thm:MLproduct} to compute its ML degree. 

\begin{exa}
\label{ex:running_exampleAgain}
We consider $\Lcal = \{ K \in \bbS^3 \mid \kappa_{11} = \kappa_{22} = 0 \}$.
The ML degree is the number of lines  $ \ell \in \Gamma \cap \mathcal{G}_S$ for generic $S$, by Theorem~\ref{thm:MLproduct}. We fix a generic $S$, with entries $s_{ij}$, and show that there is only one line $\ell$, spanned by $\Sigma \in L^{-1}$ and $\Gamma \in L^\perp$, with $S \in \ell$. We first express $\Sigma$ in terms of $S$. 
Since matrices in $L^\perp$ are supported only on the $(1,1)$ and $(2,2)$ entries, we have $\sigma_{ij} = s_{ij}$ for all entries except possibly $\sigma_{11}$ and $\sigma_{22}$. But $\Sigma^{-1} \in L$ means $\sigma_{ii} \sigma_{33} - \sigma_{i3}^2 = 0$ for $i=1,2$. Since $s_{33} \neq 0$ by genericity of $S$, we recover all entries of $\Sigma$ uniquely from $S$. The line spanned by $\Sigma$ and generic $S$ meets the linear space $L^\perp$ at the unique point $\Gamma$. 
\end{exa}

\section{Hyperplanes}
\label{sec:hyperplanes} 

In this section, we find the ML degree of hyperplanes in $\bbS^n$ via two methods: by finding defining equations of $\Lcal^{-1}$, and by the line geometry formula from Section~\ref{sec:2.3}. We also compute the ML degree of hyperplanes in the space of diagonal matrices via these two methods. We confirm that our results agree with formulae for the ML degree of a diagonal linear model via matroids, see~\cite[Section 3]{sturmfels2010multivariate}.

We saw in Section~\ref{sec:2.3} that hyperplanes are excluded from the statement of Theorem~\ref{thm:MLproduct}: for hyperplanes, the projection map $\gamma$ need not be birational. In fact, here we see that $\deg(\gamma)$ is equal to the ML degree. 

\begin{prop}
\label{prop:deggamma}
Let $\Lcal \subset \bbS^n$ be a hyperplane with non-zero ML degree. Then the ML degree of $\Lcal$ is the degree of the projection $\gamma$ in~\eqref{eqn:gamma}. 
\end{prop}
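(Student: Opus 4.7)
Since $\Lcal$ is a hyperplane, the polar space $\Lcal^\perp \subset \bbS^n$ is one-dimensional, so its projectivization $L^\perp \subset \PP\bbS^n$ is a single point, which I denote by $p$. The plan is to identify the generic fiber of $\gamma$ with the generic fiber of the projection $\pi_{L^\perp}|_{L^{-1}}$ from Proposition~\ref{prop:projection}, and then conclude by that proposition.

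The first step is to show that $\mathcal{J}$ coincides with the Schubert variety $\mathcal{G}_p$ of lines through $p$. The inclusion $\mathcal{J} \subseteq \mathcal{G}_p$ is immediate, since every $\ell \in \mathcal{J}$ is spanned by some $\Sigma \in L^{-1}$ and some $\Gamma \in L^\perp = \{p\}$, and hence passes through $p$. For the reverse inclusion, non-zero ML degree implies (by the same argument as in the proof of Lemma~\ref{lem:degGammaIsOne}) that the join of $L^{-1}$ with $L^\perp$ fills $\PP\bbS^n$. Since this join is the cone over $L^{-1}$ with apex $p$, every line through $p$ meets $L^{-1}$, generically at a point distinct from $p$, and hence lies in $\mathcal{J}$.

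Next I would compare fibers. For a generic $\ell \in \mathcal{J} = \mathcal{G}_p$, any triple $(\Sigma, \Gamma, \ell) \in \gamma^{-1}(\ell)$ must satisfy $\Gamma = p$, so $\gamma^{-1}(\ell)$ is naturally identified with $(L^{-1} \cap \ell) \setminus \{p\}$. On the other hand, for generic $S \in \PP\bbS^n$ the image $\pi_{L^\perp}(S) = L_S^\perp$ is the unique line through $p$ and $S$, and the generic fiber of $\pi_{L^\perp}|_{L^{-1}}$ over this line is $(L^{-1} \cap L_S^\perp) \setminus \{p\}$. The two descriptions agree, so $\gamma$ and $\pi_{L^\perp}|_{L^{-1}}$ have the same generic fiber cardinality. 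By Proposition~\ref{prop:projection} this common number is the ML degree of $\Lcal$, and reducedness of the fiber (Lemma~\ref{lemma:1}) guarantees that it genuinely computes $\deg(\gamma)$.

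The main subtlety is the identification $\mathcal{J} = \mathcal{G}_p$. In higher codimension $\mathcal{J}$ is a proper subvariety of the relevant Schubert cycle, and the redundancy present in the pair $(\Sigma, \Gamma)$ is exactly what forces $\deg(\gamma) = 1$ in Lemma~\ref{lem:degGammaIsOne}; for hyperplanes, the second parameter collapses to the single point $p$, that redundancy disappears, and $\deg(\gamma)$ absorbs the full ML degree.
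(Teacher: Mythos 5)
Your proof is correct and takes essentially the same route as the paper's: both arguments reduce to the observation that, because the ML degree is non-zero, the generic line through the point $L^\perp$ is a generic point of $\mathcal{J}$, so the generic fiber of $\gamma$ coincides with the generic fiber $(L^{-1}\cap L^\perp_S)\setminus L^\perp$ of $\pi_{L^\perp}|_{L^{-1}}$, whose cardinality is the ML degree by Proposition~\ref{prop:projection}. The only difference is one of explicitness: the paper compresses your cone/join step ($\mathcal{J}=\mathcal{G}_p$) into the single assertion that $\ell$ is a generic point of $\mathcal{J}$, while you spell it out.
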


\begin{proof}
The ML degree is the number of intersection points of a generic line $\ell$ passing through the point $L^\perp$
with $L^{-1} \setminus L^\perp$, by Proposition~\ref{prop:projection}.
Since this number is non-zero by assumption, 
the line $\ell$ is a generic point on $\mathcal{J}$. 
Hence, the degree of $\gamma$ is also the cardinality of the intersection of $\ell$ with $L^{-1} \setminus L^\perp$.
\end{proof}

\begin{rem}
When $\Lcal \subset \bbS^n$ is a hyperplane with non-zero ML degree, $L^\perp$ is a point and $\mathcal{J} \cap \mathcal{G}_S$ consists of the unique line spanned by $L^\perp$ and the generic point~$S$.
Hence, for \emph{any} regular linear subspace $\Lcal \subset \bbS^n$, 
Theorem~\ref{thm:MLproduct} and Proposition~\ref{prop:deggamma} combine to show that the ML degree of $\Lcal$ is 
\begin{equation}
    \label{eqn:mlproduct}
    | \mathcal{J} \cap \mathcal{G}_S| \cdot \deg(\gamma),
\end{equation}
for generic $S$, 
using the convention $0 \cdot \infty = 0$.
Indeed, the ML-degree is zero if and only if $\mathcal{J} \cap \mathcal{G}_S =\emptyset$.
In the case of non-zero ML degree, 
$|\mathcal{J} \cap \mathcal{G}_S|  = 1$
if $\Lcal$ is a hyperplane, otherwise $\deg(\gamma) = 1$ by Lemma~\ref{lem:degGammaIsOne}.
\end{rem}

We now compute the ML degree of a hyperplane. 
We write the linear equation defining the hyperplane as $\tr(A \K) = 0$, where $A$ is a fixed complex symmetric matrix.
We first obtain a description of the reciprocal variety $\Lcal^{-1}$ for a hyperplane $\Lcal$. 

\begin{lemma}
\label{lem:inverseeq}
Consider the hyperplane $\Lcal = \{ \K : \tr(A\K) = 0 \}$. The variety $\mathcal{L}^{-1}$ is a hypersurface defined by the irreducible degree $n-1$ polynomial $ \tr(A \cdot  \operatorname{adj}(\K))$.
\begin{proof}
The polynomial $f(\K):= \tr(A \cdot  \operatorname{adj}(\K))$ defines $\adjHP_\Lcal = Z(f)$, which contains $\Lcal^{-1}$. 
The variety $\Lcal^{-1}$ is a hypersurface, since matrix inversion is a birational map.
Hence $\Lcal^{-1}$ is an irreducible component of $\adjHP_\Lcal$.

To conclude, we show that the polynomial $f$ is irreducible. Matrices in $\adjHP_\Lcal \setminus \Lcal^{-1}$ must be singular and, since $\adjHP_\Lcal$ is a hypersurface, the existence of some $\Sigma \in \adjHP_\Lcal \setminus \Lcal^{-1}$ implies the existence of a codimension one locus of singular matrices, i.e. the locus $Z(\det)$. 
However, the determinant does not divide $f(\K)$ since the determinant has degree $n$, while $f$ has degree $n-1$. It remains to show that $f$ is not a power of a lower degree polynomial. For this, we observe that the diagonal entries $\kappa_{ii}$ of $\K$ are only present with linear exponent in $f$. (The off diagonal entries may be present with higher power because we are in the space of symmetric matrices.) For example, we can write $f = \kappa_{ii} b_i(\K) + c_i(\K)$, where $b_i$ and $c_i$ do not involve the variable $\kappa_{ii}$. This cannot be a power of a polynomial $g(\K)$ unless $b_i(\K) = 0$ for all $i$. 
But we have the equality $b_i(K) = \tr(A_i \cdot \adj(K_i))$, where $A_i$ and $K_i$ denote the submatrices of $A$ and $K$ without row and column $i$.
Hence, the polynomial $b_i$ is zero if and only if $A_i$ is zero.
Repeating this for three values of $i$ shows that all entries of $A$ must vanish, a contradiction. It remains to consider the case $n=2$. Here, $f$ is linear, hence irreducible. 
\end{proof}
\end{lemma}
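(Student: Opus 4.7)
The plan is to show $\Lcal^{-1} = Z(f)$ with $f(\K) := \tr(A \cdot \adj(\K))$ an irreducible polynomial of degree $n-1$. First, I verify the inclusion $\Lcal^{-1} \subseteq Z(f)$ by a direct calculation: for invertible $\K \in \Lcal$, we have $\adj(\K^{-1}) = \det(\K^{-1})\K$, so $f(\K^{-1}) = \det(\K^{-1})\tr(A\K) = 0$. Since matrix inversion is a birational involution on $\bbS^n$, the image $\Lcal^{-1}$ of the irreducible hypersurface $\Lcal$ is again an irreducible hypersurface; hence it is an irreducible component of $Z(f)$. The entries of $\adj(\K)$ are homogeneous of degree $n-1$, so $\deg f = n-1$.

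To finish, I would prove $f$ is irreducible in two steps. Factor $f = g_1^{a_1} \cdots g_r^{a_r}$ into distinct irreducibles and let $g_1$ be the one cutting out $\Lcal^{-1}$. For any other $g_j$, a generic $\Sigma \in Z(g_j) \setminus \Lcal^{-1}$ must be singular, because otherwise the defining equation of $Z(f)$ together with $\adj(\Sigma) = \det(\Sigma)\Sigma^{-1}$ would give $\Sigma^{-1} \in \Lcal$, placing $\Sigma$ in $\Lcal^{-1}$. So $Z(g_j) \subseteq Z(\det)$, and since the determinant on $\bbS^n$ is irreducible, $g_j$ must equal $\det$ up to a scalar. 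But $\deg \det = n > n-1 = \deg f$, a contradiction. Hence $r = 1$ and $f = g_1^{a_1}$.

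It remains to exclude the possibility $a_1 \geq 2$. The structural observation is that $f$ is linear in each diagonal variable $\kappa_{ii}$: the adjugate entry $\adj(\K)_{jk}$ is linear in $\kappa_{ii}$ when $j, k \neq i$, and does not involve $\kappa_{ii}$ when $j = i$ or $k = i$. Writing $f = b_i(\K)\kappa_{ii} + c_i(\K)$ with $b_i, c_i$ free of $\kappa_{ii}$, a proper-power factorization $f = g^{a_1}$ with $a_1 \geq 2$ would force $b_i \equiv 0$. A cofactor expansion identifies $b_i(\K)$ with $\tr(A_i \cdot \adj(\K_i))$, where $A_i$ and $\K_i$ are the principal submatrices obtained by deleting row and column $i$. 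Since the adjugate of a generic symmetric matrix is itself generic in $\bbS^{n-1}$, vanishing of $b_i$ forces $A_i = 0$. Imposing this for three values of $i$ (feasible for $n \geq 3$) forces every entry of $A$ to be zero, contradicting regularity of the hyperplane. The base case $n = 2$ is trivial because $\deg f = 1$.

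The main obstacle will be the last paragraph: identifying the coefficient $b_i$ explicitly with $\tr(A_i \cdot \adj(\K_i))$ and arguing that this combination cannot vanish identically without $A_i = 0$. The earlier steps are routine dimension-counting and an elementary primary decomposition, whereas the diagonal-linearity argument requires careful bookkeeping of how each $\kappa_{ii}$ enters the adjugate matrix.
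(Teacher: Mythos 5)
Your proposal is correct and follows essentially the same route as the paper's proof: show $\Lcal^{-1}$ is an irreducible component of $Z(f)$ via birationality of inversion, rule out other components by noting they would lie in $Z(\det)$ and comparing degrees $n$ versus $n-1$, exclude proper powers via linearity of $f$ in each $\kappa_{ii}$ together with the identity $b_i = \tr(A_i \cdot \adj(\K_i))$ applied for three indices, and treat $n=2$ separately. Your version is in fact slightly more explicit than the paper's where it matters --- you justify $b_i \equiv 0 \Rightarrow A_i = 0$ by dominance of the adjugate map on $\bbS^{n-1}$, which the paper merely asserts --- and the only quibble is that the final contradiction is with $A \neq 0$ (needed for $\Lcal$ to be a hyperplane), not with regularity of $\Lcal$.
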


We now prove our main result of the section.

\begin{prop}
\label{prop:hyperplane} 
Let $\Lcal$ be a hyperplane in $\bbS^n$ defined by the equation $\tr ( A \K)=~0$, for some non-zero matrix $A \in \bbS^n$. Then the ML degree of $\Lcal$ is $\rk(A) -1$.
\end{prop}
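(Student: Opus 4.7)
The plan is to exploit the fact that for a hyperplane $\Lcal$, the polar space $L^\perp$ is the single point $[A] \in \PP\bbS^n$, so the ML degree reduces to counting how many times $L^{-1}$ meets a generic line through $[A]$ away from $[A]$. First, by Lemma~\ref{lem:congruence}, I would replace $A$ with a congruent matrix and assume $A = \mathrm{diag}(I_r, 0_{n-r})$ where $r = \rk(A)$. Lemma~\ref{lem:inverseeq} then describes $L^{-1}$ as the irreducible hypersurface of degree $n-1$ cut out by $f(\K) = \tr(A \cdot \adj(\K)) = \sum_{i=1}^r \det(\K^{(i)})$, where $\K^{(i)}$ is the principal submatrix with row and column $i$ deleted. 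Applying Proposition~\ref{prop:projection} together with the standard fact that the projection of a degree-$d$ hypersurface $X \subset \PP^N$ from a point $p$ has degree $d - \mathrm{mult}_p(X)$, the ML degree of $\Lcal$ equals $(n-1) - m_A$, where $m_A$ denotes the multiplicity of $L^{-1}$ at $[A]$ (taken to be $0$ if $[A] \notin L^{-1}$). The task thus reduces to proving $m_A = n - r$ when $r < n$ and $m_A = 0$ when $r = n$.

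The heart of the argument is an explicit Taylor expansion, which I expect to be the main technical obstacle. Write $\K = A + \epsilon$ and block-partition $\epsilon$ according to the splitting $n = r + (n-r)$,
\[
\epsilon = \begin{pmatrix} \epsilon_{11} & \epsilon_{12} \\ \epsilon_{12}\T & \epsilon_{22} \end{pmatrix},
\]
with $\epsilon_{22}$ of size $(n-r) \times (n-r)$. For each $i \in \{1,\ldots,r\}$, the submatrix $\K^{(i)}$ has upper-left block $I_{r-1} + \epsilon_{11}'$ (invertible near $\epsilon = 0$, where $\epsilon_{11}'$ is $\epsilon_{11}$ with row and column $i$ deleted) and lower-right block exactly $\epsilon_{22}$. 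The Schur complement identity gives
\[
\det(\K^{(i)}) = \det(I_{r-1} + \epsilon_{11}') \cdot \det\bigl(\epsilon_{22} - (\epsilon_{12}')\T (I_{r-1} + \epsilon_{11}')^{-1} \epsilon_{12}'\bigr).
\]
The first factor is $1 + O(\epsilon)$, while the Schur complement equals $\epsilon_{22} + O(\epsilon^2)$ as an $(n-r)\times(n-r)$ matrix, so its determinant expands as $\det(\epsilon_{22}) + O(\epsilon^{n-r+1})$. Summing over $i = 1, \ldots, r$ yields $f(\K) = r\det(\epsilon_{22}) + O(\epsilon^{n-r+1})$.

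When $r < n$, the leading term $r\det(\epsilon_{22})$ is a nonzero homogeneous polynomial of degree exactly $n-r$ in the entries of $\epsilon_{22}$, hence $m_A = n - r$ and the ML degree equals $(n-1) - (n-r) = r - 1$. When $r = n$, the block $\epsilon_{22}$ is empty with $\det(\epsilon_{22}) = 1$, giving $f(A) = n \neq 0$; thus $[A] \notin L^{-1}$, $m_A = 0$, and the ML degree is $n - 1 = r - 1$. Either way the ML degree equals $\rk(A) - 1$. The Schur complement bookkeeping is what makes the leading-order extraction clean: it automatically isolates the contribution $\det(\epsilon_{22})$ from each summand without forcing one to expand the full $(n-1) \times (n-1)$ determinant.
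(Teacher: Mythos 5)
Your proof is correct, but it takes a different route from the paper's main argument. The paper proves Proposition~\ref{prop:hyperplane} by intersecting $\Lcal^{-1}$ with the affine line $S+tA$ and using the identity $\tr(A\cdot\adj(S+tA)) = \frac{d}{dt}\det(S+tA)$: after reducing to $A = \mathrm{diag}(I_r,0)$ by Lemma~\ref{lem:congruence}, the polynomial $\det(S+tA)$ has degree exactly $r$ in $t$ (its leading coefficient is the lower-right $(n-r)\times(n-r)$ minor of the generic $S$), so its derivative has $r-1$ roots, simple by Lemma~\ref{lemma:1}. Your route --- ML degree $= \deg L^{-1} - \mathrm{mult}_{[A]}L^{-1}$ for the projection from the point $L^\perp = \{[A]\}$ --- is the one the paper only sketches in the remark following the proposition; but where that remark obtains the multiplicity $n-r$ by noting that $\tr(A\cdot\adj(sS+tA))$ is divisible by $s^{n-r}$, a fact it imports from the main proof's computation, you compute $\mathrm{mult}_{[A]}L^{-1} = n-r$ intrinsically via the Schur-complement expansion $f(A+\epsilon) = r\det(\epsilon_{22}) + O(\epsilon^{n-r+1})$, which is self-contained and needs no generic $S$ at all. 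Two small points of hygiene, both of which your write-up in fact respects: you need the irreducibility statement of Lemma~\ref{lem:inverseeq}, so that $f$ is the \emph{reduced} equation of $L^{-1}$ and the order of vanishing of $f$ at $A$ really is the multiplicity of the variety; and the ``standard fact'' about projection from a point is legitimate here because a generic line through $[A]$ meets $L^{-1}$ at $[A]$ with multiplicity exactly $\mathrm{mult}_{[A]}L^{-1}$ while the residual intersection is reduced (Lemma~\ref{lemma:1}) and consists of invertible matrices (Lemma~\ref{lemma:2}), so all residual points count toward the ML degree by Proposition~\ref{prop:projection}. Comparing what each approach buys: the paper's derivative argument is shorter and entirely elementary (a univariate degree count), whereas yours costs more bookkeeping but yields genuinely local information --- it exhibits the lowest term $r\det(\epsilon_{22})$ of $L^{-1}$ at $L^\perp$, hence the multiplicity $n-r$, handles the degenerate case $r=1$ uniformly (there $\mathrm{mult}_{[A]}L^{-1} = \deg L^{-1}$, so $L^{-1}$ is a cone with vertex $[A]$ and the ML degree is $0$, whereas the paper's remark assumes non-zero ML degree), and is closer in spirit to the $S$-free intersection-theoretic formula of Theorem~\ref{segre-classes-formula}.
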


\begin{proof}
We count points in the intersection of the hypersurface $\Lcal^{-1}$ with $\Lcal^\perp + S$. The linear space $\Lcal^\perp + S$ is the line $tA + S$. 
Since the polynomial in Lemma~\ref{lem:inverseeq} is the defining equation of $\Lcal^{-1}$,
the intersection is the values of $t$ such that 
$$ \tr( A \cdot \adj(S + tA))  = 0 .$$
 The number of values of $t$ for which this condition holds is the ML degree of $\Lcal$. Moreover, by Lemma~\ref{lemma:1} the points are present without multiplicity for generic $S$. 
We observe that
$$ \tr( A \cdot \adj(S + tA))  = 
    \frac{d}{dt} \left[ \det(S + tA)\right] .$$
It therefore suffices to show that the degree of the polynomial $\det(S + tA)$ is $r:= \rk(A)$. 

The ML degree is unchanged under congruence, by Lemma~\ref{lem:congruence}. We know from~\cite[Equation (1.1)]{bunse1988singular} that every complex symmetric matrix is congruent to a diagonal matrix with entries $(1,\ldots,1,0,\ldots,0)$ on the diagonal. Hence we can assume the matrix $A$ defining the hyperplane $\Lcal$ has this form. This shows that the polynomial has rank at most $r$ in $t$. The coefficient of $t^r$ is the determinant of the submatrix of $S$ on rows $r+1, \ldots, n$ and columns $r+1, \ldots, n$, if $r < n$, or the coefficient is $1$ if $r = n$. Since $S$ is generic, the polynomial $\det(S + tA)$ has degree $r$, as required.
\end{proof}

\begin{rem}
We give an alternative proof of Proposition~\ref{prop:hyperplane} using Proposition~\ref{prop:deggamma}, for $\Lcal$ with non-zero ML degree.
We count the number of intersection points of $L^{-1}$ with a generic $\ell \in \mathrm{Gr}(1, \PP \bbS^n)$ passing through $A$.
Since the degree of $L^{-1}$ is $n-1$, by Lemma~\ref{lem:inverseeq}, there are $n-1$ intersection points in total, counted with multiplicity. The intersection multiplicity at the point $A$ is $n-r$, as follows. 
The homogeneous polynomial $\tr(A \cdot \adj(sS + tA))$ is divisible by $s^{n-r}$, by the congruence argument in the proof of Proposition~\ref{prop:hyperplane}.
The ML degree is the number of intersection points away from $A$, which is is equal to $(n-1) - (n-r) = r-1$.
\end{rem}

\begin{exa}
If $\Lcal$ is defined by $\tr(A \K)$ where $A$ has rank one, the linear space $\Lcal$ has ML degree zero. We study ML degree zero examples in Section~\ref{sec:mldeg0}.
\end{exa}

\noindent
We consider models defined by hyperplanes in the space of diagonal matrices.

\begin{prop}
\label{prop:diagonal_hyperplane} 
Consider a regular linear subspace $\Lcal \subset \bbS^n$ defined by $\kappa_{ij} = 0$ for all $i \neq j$ together with the condition $\tr ( A \K ) = 0$ for a non-zero diagonal matrix $A \in \bbS^n$. Then the ML-degree of $\Lcal$ is $\rk(A) -1$. 
\end{prop}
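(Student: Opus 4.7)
My plan is to adapt the argument of Proposition~\ref{prop:hyperplane} to this ``hyperplane in the diagonal subspace'' setting, using Proposition~\ref{prop:firstapproach} as the main engine. First I would use Lemma~\ref{lem:congruence} together with the observation that diagonal congruences $K \mapsto GKG\T$ (with $G$ diagonal) preserve both the diagonality condition on $\Lcal$ and the diagonality of $A$. Diagonal congruence scales the diagonal entries of $A$ by squares, so without loss of generality $A = \mathrm{diag}(1,\ldots,1,0,\ldots,0)$ with exactly $r := \rk(A)$ ones. Since $\Lcal$ is regular, $r \geq 2$.

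Next I would identify a basis of $\Lcal^\perp$. A matrix $B \in \bbS^n$ lies in $\Lcal^\perp$ iff $\tr(BK)=0$ for every diagonal $K$ with $\sum_{i\le r}\kappa_{ii}=0$; one checks that this forces the diagonal of $B$ to be a multiple of $A$, while the off-diagonal entries are free. Hence a basis is $\{A\} \cup \{E_{ij}+E_{ji} : i<j\}$. By Proposition~\ref{prop:firstapproach}, the ML degree equals the number of invertible critical points of
\[
\ell(s,t_{ij}) \;=\; \det(M), \qquad M := S + sA + \sum_{i<j} t_{ij}(E_{ij}+E_{ji}).
\]
The equations $\partial_{t_{ij}}\ell = 2\,\adj(M)_{ij} = 0$ for $i<j$ say that $\adj(M)$ is diagonal, and for invertible $M$ this is equivalent to $M$ itself being diagonal. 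This forces $t_{ij} = -s_{ij}$, leaving $M$ diagonal with entries $s_{ii}+s$ for $i \leq r$ and $s_{ii}$ for $i > r$. The remaining equation $\partial_s\ell = \sum_{i=1}^r \adj(M)_{ii}=0$ then factors as
\[
\Bigl(\prod_{j>r} s_{jj}\Bigr)\cdot Q'(s) \;=\; 0, \qquad Q(s) := \prod_{k=1}^r (s_{kk}+s),
\]
and for generic $S$ the first factor is non-zero.

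Finally, I would analyze $Q'$. It has degree $r-1$ with leading coefficient $r\ne 0$, so it has $r-1$ roots counted with multiplicity. For generic $S$ the roots $-s_{kk}$ of $Q$ are distinct, so $\gcd(Q,Q')=1$, and the discriminant of $Q'$ (a non-identically-zero polynomial in the $s_{kk}$) is also non-zero; thus $Q'$ has $r-1$ distinct roots, none of which coincides with any $-s_{kk}$. At each such root, every diagonal entry of $M$ is non-zero, so $M$ is invertible. This produces exactly $r-1$ invertible critical points, matching the conclusion of Proposition~\ref{prop:hyperplane} (indeed the polynomial $Q'$ is precisely the derivative of $\det$ restricted to the affine line $S+sA$ inside the diagonal subspace). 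The main obstacle is purely bookkeeping: one must keep track of which indices $i$ contribute ($i \leq r$) after the reduction of $A$ to standard form, and confirm that the generic non-vanishing conditions on $S$ used at each step are compatible.
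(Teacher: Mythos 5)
Your proof is correct, but it takes a genuinely different route from the paper's. You work on the primal side: after the same congruence reduction (and you are in fact more careful than the paper in noting that the congruence must be by a \emph{diagonal} matrix so as to preserve the subspace of diagonal matrices), you invoke Proposition~\ref{prop:firstapproach} and solve the critical equations of $\det\bigl(S + sA + \sum_{i<j} t_{ij}(E_{ij}+E_{ji})\bigr)$ explicitly, using the nice observation that for invertible $M$ the off-diagonal equations (``$\adj(M)$ is diagonal'') force $M$ itself to be diagonal, which pins down $t_{ij}=-s_{ij}$ and decouples everything into the single univariate equation $Q'(s)=0$ with $Q(s)=\prod_{k\le r}(s_{kk}+s)$. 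The paper instead works on the reciprocal side: it shows that $L^{-1}$ is the hypersurface $\sum_{i\le r}\prod_{j\ne i}x_j=0$ of degree $r-1$ inside the diagonal matrices (ruling out coordinate hyperplanes in the closure), proves $L^{-1}\cap L^\perp=\emptyset$, and concludes via Proposition~\ref{prop:projection} that the ML degree equals $\deg L^{-1}=r-1$, delegating reducedness of the generic intersection to Lemma~\ref{lemma:1}. The paper's route buys the extra geometric facts that $\deg L^{-1}=r-1$ and that the reciprocal and polar varieties are disjoint; your route is more elementary and self-contained, never needing the irreducibility/closure analysis of $L^{-1}$, and it verifies by hand both the invertibility of the critical matrices and the distinctness of the $r-1$ solutions. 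It is closest in spirit to the paper's Remark~\ref{rem:3.8}, where the same degree-$(r-1)$ polynomial $\sum_{i=1}^r\prod_{j\ne i}(s_j-t)$ appears, though there it arises from counting lines through $S$ rather than from critical equations. One small point deserves a line in your write-up: the claim that the discriminant of $Q'$ is not identically zero in the $s_{kk}$ needs a witness --- e.g.\ for real, pairwise distinct values $s_{kk}$, Rolle's theorem places exactly one root of $Q'$ strictly between consecutive roots of $Q$, yielding $r-1$ distinct roots, so the discriminant is nonzero at that point.
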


\begin{proof}
We first show that the degree of $L^{-1}$ is $\rk(A)-1$.
We can change basis under congruence action so that $A$ is the matrix with $r:= \rk(A)$ ones on its diagonal, and all other entries zero. We let the diagonal coordinates of an $n \times n$ matrix be given by the variables $x_1, \ldots, x_n$. Then the variety $L$ is defined by 
$$ x_1 + \cdots + x_r = 0 .$$
The variety $L^{-1}$ is also contained in the space of diagonal matrices. It is defined by the condition
$$ \frac{1}{x_1} + \cdots + \frac{1}{x_r} = 0 . $$
We multiply by the product $x_1 \cdots x_r$ to obtain a hypersurface $V$ defined by an irreducible polynomial of degree $r-1$. We now exclude the possibility that $V$ strictly contains $L^{-1}$. A matrix in $V \setminus L^{-1}$ is non-invertible. Hence if $L^{-1} \subsetneq V$ then $V$ must contain a hypersurface of non-invertible diagonal matrices, i.e. $V$ must contain a coordinate hyperplane. We see from its defining equation that $V$ does not contain a coordinate hyperplane, hence $V = L^{-1}$. 

We show that the ML degree of $\Lcal$ agrees with the degree of $L^{-1}$. 
It suffices to show that $L^{-1} \cap L^\perp$ is empty,
by Proposition~\ref{prop:projection}. The variety $L^{-1}$ is contained in the diagonal matrices, and the only diagonal matrix in $L^\perp$ is $A$. We conclude by observing that $A \notin L^{-1}$, by setting $x_1 = \ldots = x_r = 1$ into the equation for $L^{-1}$.
\end{proof} 

\begin{rem}
\label{rem:3.8}
We give an alternative proof of Proposition~\ref{prop:diagonal_hyperplane}, based on Theorem~\ref{thm:MLproduct}.
That is, we count the lines $\ell$ spanned by $(\Sigma,\Gamma) \in L^{-1} \times L^\perp$ passing through a generic $S$. We seek the $\Gamma \in \Lcal^\perp$ such that $S-\Gamma \in \Lcal^{-1}$. The off diagonal entries of $\Gamma$ must match those of $S$, since $\Lcal^{-1}$ is contained in the diagonal matrices, so it suffices to look at diagonal entries. The diagonal entries of $\Gamma$ are those of $tA$ for some scalar $t$. As before, we work up to congruence, and assume that $A$ is diagonal with $r:= \rk(A)$ ones on the diagonal. Then the condition $S - \Gamma \in \Lcal^{-1}$ gives the following degree $r-1$ polynomial in $t$:
$$ \sum_{i=1}^r \left( \prod_{j \neq i} (s_j - t) \right) ,$$
where $s_j$ is the $j$th diagonal entry of $S$. Hence a generic $S$ lies on $r-1$ lines.
\end{rem} 

We note that the same ML degree of $\rk(A)-1$ appears in both Propositions~\ref{prop:hyperplane} and~\ref{prop:diagonal_hyperplane}. 
However, the two occurrences of $\rk(A)-1$ come from different parts of the multiplicative formula for the ML degree in~\eqref{eqn:mlproduct}. 
For a hyperplane, there is a unique line for each $S$ but $\rk(A)-1$ parametrizations of each line. In comparison, for a hyperplane in the diagonal matrices, each $S$ lies on $\rk(A)-1$ lines, each with a unique parametrization.

\bigskip

We now describe how Proposition~\ref{prop:diagonal_hyperplane} follows from more general results: a Gr\"{o}bner basis for $\Lcal^{-1}$ from~\cite{proudfoot2004broken}, and a formula for the ML degree of $\Lcal$ from the characteristic polynomial of its associated matroid, see~\cite[Theorem 2.1(a)]{eur2020reciprocal}. 
We identify the space of diagonal $n \times n$ matrices with $\CC^n$, and view a linear space of diagonal matrices as $\Lcal \subseteq \CC^n$. 
The inverse variety $\Lcal^{-1}$ is then also contained in the diagonal matrices.  The inverse $\Lcal^{-1}$ does not intersect the polar space $\Lcal^\perp$, see~\cite[Corollary 3.3]{sturmfels2010multivariate}. Hence the ML degree of $\Lcal$ is $\deg(\Lcal^{-1})$, by Proposition~\ref{prop:projection}.

ML degrees were connected to matroids in~\cite[Section 3]{sturmfels2010multivariate}. 
A brief introduction to matroids is given in~\cite{ardila2018geometry}.
A matroid $M$ is pair $(E,\mathcal{I})$, where $E$ is a finite set, and $\mathcal{I}$ a collection of subsets of $E$, called its independent sets, which satisfy certain axioms. A matroid can also be defined by its circuits, other subsets $C \subseteq E$ that satisfy certain other axioms.
We briefly describe how to associate a matroid $M$ to a linear space $\Lcal \subseteq \CC^n$. We take $E = [n]$. The matroid $M$ has circuits given by minimal subsets $C \subseteq [n]$ such that a linear combination of $\{ x_c : c \in C \}$ vanishes on $\Lcal$.  
That is, the circuits of $\Lcal$ are the supports of minimal support vectors in $\Lcal^\perp$, see~\cite[Theorem 3.2]{sturmfels2010multivariate}.
For example, if $\Lcal$ consists of all vectors orthogonal to $e_1 + \cdots + e_r$, then $M$ has just one circuit, $\{ 1, \ldots, r \}$. 

Assume the linear combination of $\{ x_c : c \in C \}$ that vanishes on $\Lcal$ is $\sum_{c \in C} a_c x_c$. Following~\cite{proudfoot2004broken}, we define the polynomial
$$ f_C := \sum_{c \in C} a_c \left( \prod_{c' \in C \backslash \{ c \} } x_{c'} \right) .$$
The polynomials $f_C$, as $C$ ranges over circuits of $M$, gives a universal Gr\"{o}bner basis for the ideal defining $\Lcal^{-1}$, see~\cite[Theorem 4]{proudfoot2004broken}. In the special case where the linear combination is $x_1 + \cdots + x_r$, the polynomial $f_C$ is constructed in the proof of Proposition~\ref{prop:diagonal_hyperplane}.

If $\Lcal$ is a hyperplane in the diagonal matrices then, up to congruence, it is defined by the vanishing of $x_1 + \cdots + x_r$, where $r$ is the rank of the diagonal matrix in $\Lcal^\perp$. The result~\cite[Theorem 4]{proudfoot2004broken} implies that $\Lcal^{-1}$ is described by the vanishing of $\sum_{i=1}^r \left( \prod_{1 \leq j \leq r, j \neq i} x_j \right)$. This polynomial has degree $r-1$, hence $\deg(\Lcal^{-1})$ is $r-1$, and the ML degree of $\Lcal$ is also $r-1$. 
Together with Remark~\ref{rem:3.8}, this gives a third proof of Proposition~\ref{prop:diagonal_hyperplane}.
We conclude this section with a fourth proof, obtained by specializing a formula for the ML degree in terms of the characteristic polynomial of its associated matroid, see~\cite[Theorem 2.1(a)]{eur2020reciprocal}.

The characteristic polynomial of the matroid $M$ is:
$$ \chi_M ( \lambda) := \sum_{S \subseteq [n]} {(-1)}^{|S|} \lambda^{r(M) - r(S)} , $$
where $r(M)$ is the rank of $M$, the size of its maximal independent sets, and $r(S)$ is the rank of the submatroid on $S$. A subset  is independent in the submatroid on $S$ if it is independent in $M$ and contained in $S$. The constant term $\chi_M(0)$ is then the number of subsets of $[n]$ whose restriction has the same rank as $M$.
We have $\deg(\Lcal^{-1}) = |\chi_M(0)|$, see~\cite[Theorem 2.1(a)]{eur2020reciprocal}.
The invariant $\chi_M(0)$ is sometimes called the M\"obius invariant of the matroid $M$, and denoted $\mu(M)$; it is mistakenly referred to as the beta invariant in~\cite{sturmfels2010multivariate}.

We evaluate $\chi_M(0)$ when $\Lcal$ is a hyperplane in the diagonal matrices. 
As above, we work up to congruence and assume $\Lcal$ has normal vector $e_1 + \cdots + e_r$. The rank of $M$ is $n-1$, hence a subset $S \subset [n]$ can only be a submatroid of the same rank if $|S| \geq n-1$. There is one choice with $|S| = n$. It remains to count the $S$ of size $n-1$. There can be no circuits in the submatroid, so we must have removed one of the first $r$ coordinates. Hence there are $r$ choices. We obtain $\chi_M(0) = (-1)^{m-1} (r-1)$. Hence $| \chi_M(0) | = r-1$.

\section{An intersection theory formula}

In this section we give a formula for computing the ML degree that does not involve calculations with generic matrices $S$, unlike the ones so far.

\emph{Intersection theory} is used throughout algebraic geometry to obtain answers to many kinds of counting problems. Of central importance is the \emph{Chow ring} of a smooth variety, a graded ring whose elements can be thought of as generalized subvarieties organized by their dimension. The graded parts of the Chow ring are called the \emph{Chow groups}. For instance, the Chow ring of $\mathbb P^N$ is the polynomial ring $\ZZ[\zeta]/\zeta^{N+1}$, where an element of the form $k\zeta^j$ represents a generic codimension-$j$ subvariety of degree $k$. Multiplication in the Chow ring corresponds to taking scheme-theoretic intersections of subvarieties.

Let $X_1$ and $X_2$ be irreducible subvarieties in $\mathbb P^N$ of complementary dimension. 
We consider the diagonal $\Delta \cong \mathbb P^N$ of $\PP^N \times \PP^N$ and let $X_1 \cap X_2 = \Delta \cap (X_1 \times X_2)$.
Let $\beta$ be the dimension of $X_1\cap X_2$ and $\CH_j(X_1\cap X_2)$ its $j$-th Chow group for $j=0,\dotsc,\beta$. 
The $j$-th \emph{Segre class} of $X_1\cap X_2$ in $X_1\times X_2$
is denoted
\[
s^j(X_1\cap X_2, X_1\times X_2)
\in \CH_j(X_1\cap X_2),
\]
and defined in \cite[Ch.\ 7, \S 4.2]{fulton}. We let
$\sigma^j(X_1\cap X_2, X_1\times X_2)$ denote the degree of the $j$th Segre class, taken by the inclusion of $X_1\cap X_2$ in the diagonal $\Delta \cong \mathbb P^N$. 
The function \texttt{segre(Z,V)} in the \texttt{Macaulay2} \cite{M2} package \texttt{SegreClasses}
can be used to compute the Segre classes of a subscheme $Z$ of a scheme $V$ that lives in a product of projective spaces~\cite{harris2020segre}.
The following lemma describes how to multiply classes of varieties in terms of Segre classes.

\begin{lemma}\label{segre-lemma}
Let $X_1$ and $X_2$ be irreducible varieties in $\mathbb P^N$ of complementary dimension. Let $\beta$ be the dimension of the intersection $X_1\cap X_2$. We have the following equality in the $0$-th Chow group of $X_1 \cap X_2$:
\begin{equation}\label{desired-formula}
	X_1\cdot X_2 = \sum_{j=0}^\beta \binom{N+1}{j} s^j(X_1\cap X_2, X_1\times X_2) \cdot \zeta^j.
\end{equation}
\end{lemma}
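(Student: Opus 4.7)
The plan is to realize the intersection product $X_1 \cdot X_2$ as an intersection with the diagonal and then invoke Fulton's excess intersection formula. Concretely, using the identification $X_1 \cap X_2 = \Delta \cap (X_1 \times X_2)$ inside $\PP^N \times \PP^N$, where $\Delta \cong \PP^N$ is the diagonal, I would apply the general formula from \cite[Prop.\ 6.1(a) and Cor.\ 6.3]{fulton}. This gives
\[
X_1 \cdot X_2 \;=\; \bigl\{ c(N) \cap s(X_1\cap X_2,\, X_1\times X_2) \bigr\}_0,
\]
where $N$ is the restriction to $X_1 \cap X_2$ of the normal bundle of $\Delta$ in $\PP^N\times\PP^N$, and $\{\,\cdot\,\}_0$ denotes the degree-zero component in the Chow group of $X_1\cap X_2$.

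The second step is to compute the total Chern class of $N$. Since the diagonal embedding identifies $\Delta$ with $\PP^N$, its normal bundle in $\PP^N\times\PP^N$ is canonically isomorphic to the tangent bundle $T_{\PP^N}$. By the Euler sequence, $c(T_{\PP^N}) = (1+\zeta)^{N+1}$, so that
\[
c_j(N) \;=\; \binom{N+1}{j}\zeta^j \quad \text{for } j=0,1,\dots,N.
\]
Plugging this into the excess intersection formula and pairing the degree-$j$ Chern class with the degree-$j$ Segre class yields
\[
X_1 \cdot X_2 \;=\; \sum_{j=0}^{\beta} \binom{N+1}{j}\, s^j(X_1\cap X_2,\, X_1\times X_2)\cdot \zeta^j,
\]
which is exactly \eqref{desired-formula}. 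The sum stops at $\beta=\dim(X_1\cap X_2)$ because Segre classes of higher codimension inside a variety of dimension $\beta$ vanish.

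The main technical point — and the only real obstacle — is justifying the application of Fulton's formula in the generality needed here: $X_1\cap X_2$ may be reducible, of excess dimension, or nonreduced, so one cannot just use naive Bezout. This is precisely the situation Fulton's refined intersection product is designed for, and the key input is that the diagonal $\Delta \subset \PP^N\times\PP^N$ is a regular embedding (it is the zero locus of a section of a vector bundle isomorphic to $T_{\PP^N}$), which legitimizes the excess intersection formula and forces the normal bundle to be $T_{\PP^N}|_{X_1\cap X_2}$. Once this is in place, the calculation reduces to plugging in the Euler sequence, which is a one-line computation.
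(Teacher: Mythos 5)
Your proof is correct and takes essentially the same route as the paper's: both realize $X_1\cdot X_2$ as the refined intersection of $X_1\times X_2$ with the diagonal $\Delta\subset\PP^N\times\PP^N$, apply Fulton's excess intersection formula with normal bundle $N_{\Delta}\cong T_{\PP^N}$, and expand $c(T_{\PP^N})=(1+\zeta)^{N+1}$ against the total Segre class, truncating at $\beta=\dim(X_1\cap X_2)$. The only cosmetic difference is that you cite the global formula from Fulton, Ch.~6, directly in $\CH_0(X_1\cap X_2)$, while the paper first reduces to connected components via additivity of Segre classes and invokes the component-wise Proposition~9.1.1 --- these are equivalent here, and your parenthetical justification of the regular embedding (which anyway follows simply from $\Delta$ being a smooth subvariety of a smooth variety) is only slightly imprecise in that the relevant bundle on $\PP^N\times\PP^N$ restricts to $T_{\PP^N}$ on $\Delta$ rather than being isomorphic to it.
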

\begin{proof}
Both sides of~\eqref{desired-formula} are additive over connected components, as follows. 
We have $X_1\cdot X_2 = \sum_C  (X_1\cdot X_2)^C$, where the sum runs over the connected components of $X_1\cap X_2$. 
The additivity on the right hand side follows from the fact that Segre classes are additive over connected components: if $Z \subset V$ decomposes as $Z_1 \dot\cup Z_2$ then $s^j(Z,V) = s^j(Z_1, V) + s^j(Z_2, V)$. 
Hence we may assume that $Z = X_1 \cap X_2$ is connected.

Following \cite[9.1]{fulton}, we take $Y = \mathbb P^N \times \mathbb P^N$, let $X = \Delta$ be the diagonal of $Y$, and let $V = X_1\times X_2$.
The normal bundle $\mathcal N$ of $\Delta$ in $Y$ is the tangent bundle of $\mathbb P^N$, so $c(\mathcal N) = (1 + \zeta)^{N+1}$, where $c$ denotes the total Chern class~\cite[Example 3.2.11]{fulton}.
Now, \cite[Prop.~9.1.1]{fulton} gives 
\begin{align}
\label{eq:zeroTermSegreClasses}
    (X_1\cdot X_2)^Z = (\Delta \cdot V)^Z
= \{(1+\zeta)^{N+1}\cdot s(Z, V)\}_0,
\end{align}
where $s(Z, V) = \sum_{j=0}^\beta s^j(Z,V)$ is the total Segre class and $\{ \cdots \}_0$ denotes the terms that belong to $\CH_0$. Collecting these terms, we obtain the formula~\eqref{desired-formula}.
\end{proof}

\begin{thm}\label{segre-classes-formula}
Let $N = \dim \PP \bbS^n$. The ML degree of a regular  subspace $\mathcal L \subset \bbS^n$~is
\begin{equation}\label{new-formula}
\deg L^{-1} - \sum_{j=0}^{\beta}\binom{N}{j}\sigma^j(L^{-1}\cap L^\bot, L^{-1}\times L^\perp).
\end{equation}

\end{thm}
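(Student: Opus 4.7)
The plan is to split the ML degree off from the total Bezout number $L^{-1}\cdot L_S^\perp$ as an excess-intersection contribution over the fixed component $Z:=L^{-1}\cap L^\perp$, then evaluate this contribution via Lemma~\ref{segre-lemma} applied in the product $L^{-1}\times L_S^\perp$, and finally convert Segre classes of $Z$ in $L^{-1}\times L_S^\perp$ into Segre classes in $L^{-1}\times L^\perp$ by exploiting the fact that $L^\perp$ is a hyperplane in $L_S^\perp$.

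First, by Proposition~\ref{prop:projection} and Lemma~\ref{lemma:1}, the ML degree of $\mathcal L$ equals the number of points of $L^{-1}\cap L_S^\perp$ lying outside $L^\perp$, counted without multiplicity, for generic $S$. Since $L^{-1}$ and $L_S^\perp$ have complementary dimension in $\PP^N$ and $L_S^\perp$ is linear, the total intersection class has degree $\deg L^{-1}$. The subscheme $Z$ is a connected piece of $L^{-1}\cap L_S^\perp$, disjoint from the MLdeg-many isolated reduced points (which lie outside $L^\perp\supseteq Z$, cf.\ Lemma~\ref{lem:singular}). Additivity of Segre classes over connected components, as already used in the proof of Lemma~\ref{segre-lemma}, then yields the decomposition
\[
\deg L^{-1}\;=\;\mathrm{MLdeg}(\mathcal L)\;+\;(L^{-1}\cdot L_S^\perp)^Z,
\]
so it suffices to identify $(L^{-1}\cdot L_S^\perp)^Z$ with $\sum_{j=0}^{\beta}\binom{N}{j}\sigma^j(Z,\,L^{-1}\times L^\perp)$. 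Applying Lemma~\ref{segre-lemma} to the component $Z$ gives
\[
(L^{-1}\cdot L_S^\perp)^Z \;=\; \bigl\{(1+\zeta)^{N+1}\cdot s(Z,\,L^{-1}\times L_S^\perp)\bigr\}_0.
\]

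Next I would rewrite this Segre class in terms of the smaller product $L^{-1}\times L^\perp$. Since $L^\perp$ is cut out inside $L_S^\perp$ by a single linear form, the subscheme $V:=L^{-1}\times L^\perp$ is a Cartier divisor in $W:=L^{-1}\times L_S^\perp$ with normal bundle $N_{V/W}\cong \pi_2^*\mathcal O(1)$; because $Z$ lies on the diagonal, the first Chern class of $N_{V/W}|_Z$ equals $\zeta$. Fulton's Segre-class identity for nested regular embeddings~\cite[Ex.~4.1.6]{fulton} then gives $s(Z,W)=(1+\zeta)^{-1}\cdot s(Z,V)$. Substituting turns $(1+\zeta)^{N+1}$ into $(1+\zeta)^{N}$ inside the bracket, whose zero-dimensional part has degree $\sum_{j=0}^{\beta}\binom{N}{j}\sigma^j(Z,V)$, completing the proof.

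The main obstacle will be precisely this Segre-class rewriting: one has to verify that $V\hookrightarrow W$ is a regular embedding (it is, being a relative Cartier divisor over $L^{-1}$ cut out by the $\pi_2$-pullback of a linear form defining $L^\perp$ as a hyperplane in $L_S^\perp$) and that the hyperplane class pulled back from the second $\PP^N$ genuinely coincides on $Z$ with the class $\zeta$ appearing in Lemma~\ref{segre-lemma}. A secondary point to check is that no further components of $L^{-1}\cap L_S^\perp$ appear beyond $Z$ and the isolated ML-degree points; this again follows from Lemma~\ref{lemma:1} together with the dimension count $\dim L^{-1}+\dim L_S^\perp=N$.
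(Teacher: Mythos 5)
Your overall architecture is exactly the paper's: decompose $\deg L^{-1} = \deg(L^{-1}\cdot L_S^\perp)$ into the isolated critical points (contributing the ML degree with multiplicity one, by Lemma~\ref{lemma:1} together with smoothness of $L^{-1}$ at those points) plus the excess contribution $\{(1+\zeta)^{N+1}\cdot s(Z, L^{-1}\times L_S^\perp)\}_0$ over $Z = L^{-1}\cap L^\perp$, and then trade one factor of $(1+\zeta)$ for replacing $L_S^\perp$ by $L^\perp$. But the justification of that last trade is a genuine gap: there is no ``Segre-class identity for nested regular embeddings'' of the form $s(Z,W) = c(N_{V/W}|_Z)^{-1}\cap s(Z,V)$ for $Z \subset V \subset W$ with $V\hookrightarrow W$ regular, and the statement is false in general. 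Counterexample: take $W = \PP^2$, $V$ a nodal cubic, and $Z$ the node, a reduced point. Then $V\hookrightarrow W$ is a regular (Cartier divisor) embedding, and $s(Z,W) = [Z]$ has degree $1$ since $Z$ is a reduced point of a smooth surface, while $s(Z,V) = 2[Z]$ has degree $2$ (the Samuel multiplicity of the node on the curve), so your proposed identity fails already in degree zero. Regularity of $V\hookrightarrow W$ --- the only hypothesis you verify in the step you yourself flag as the main obstacle --- is therefore not sufficient; one needs control of how $Z$ sits inside $W$ relative to $V$, and your $Z = L^{-1}\cap L^\perp$ is precisely the kind of singular, possibly non-reduced scheme for which this matters.

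The fix, which is the paper's actual argument, uses the structure your parenthetical remark gestures at but does not exploit: linear projection away from $S$ identifies $L_S^\perp\setminus\{S\}$ with the total space of $\mathcal{O}_{L^\perp}(1)$, so $W\setminus(L^{-1}\times\{S\})$ is the total space of the line bundle $\pi_2^*\mathcal{O}_{L^\perp}(1)$ over $V = L^{-1}\times L^\perp$, with $V$ embedded as the \emph{zero section}. Since $S\notin L^\perp$ for generic $S$, the scheme $Z$ (supported on diagonal points $(P,P)$ with $P\in L^\perp$) avoids $L^{-1}\times\{S\}$, so by invariance of Segre classes under restriction to an open subscheme containing $Z$ (Fulton, Proposition~4.2) one may pass to this open set, and the zero-section identity (Fulton, Example~4.2.7) gives
\[
s(Z, L^{-1}\times L_S^\perp) \;=\; c\bigl(\pi_2^*\mathcal{O}(1)|_Z\bigr)^{-1}\cap s(Z, V) \;=\; (1+\zeta)^{-1}\cdot s(Z,V).
\]
It is the zero-section structure --- the retraction of $W\setminus(L^{-1}\times\{S\})$ onto $V$, which splits the normal cone as $C_Z W = C_Z V\oplus N_{V/W}|_Z$ --- that validates the formula, and this is exactly what fails for a general Cartier divisor, as the node example shows. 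With this substitution, your computation coincides with the paper's proof; the remainder of your write-up (the component decomposition, multiplicity one at the critical points, and the binomial bookkeeping producing $\sum_{j}\binom{N}{j}\sigma^j$) is correct.
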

\begin{proof}
We apply Lemma~\ref{segre-lemma} with $X_1 = L^{-1}$ and $X_2 = L_S^\perp$ embedded in $ \mathbb{PS}^n \cong \mathbb P^N $.
By Proposition~\ref{prop:projection}, the class $L^{-1}\cdot L_S^\perp$ decomposes as the sum of a class supported in $L^{-1}\cap L^{\bot}$ and a class supported in the finite set $E$ of critical points of the log-likelihood.
Since $\deg (L^{-1}\cdot L_S^\perp) = \deg L^{-1}$, taking degrees in \eqref{desired-formula} gives
\[
\deg L^{-1} = \sum_{j=0}^\beta \binom{N+1}{j} \sigma^j(L^{-1}\cap L^\perp, L^{-1}\times L_S^\perp)  + \sum_{P\in E}\sigma^0(P, L^{-1}\times L_S^\perp).
\]
The latter term is the ML degree of $\mathcal L$ because $\sigma^0(P,L^{-1}\times L_S^\perp)$ is the multiplicity of $L^{-1}\times L_S^\perp$ along $P$ \cite[4.3]{fulton}, which is one for all $P\in E$ since $S$ is generic (by Lemma~\ref{lemma:1} and the fact that each $P$ is smooth on $L^{-1}$).
The former term is the degree zero part of the class $(1+\zeta)^{N+1}\cdot s(L^{-1}\cap L^\perp, L^{-1} \times L_S^\perp)$ as in~\eqref{eq:zeroTermSegreClasses}.

The projection $L_S^\perp\setminus \{S\} \to L^\perp $
away from $S$ identifies $L_S^\perp\setminus \{S\}$ as the hyperplane bundle $\mathcal{O}_{L^\perp}(1)$.
By~\cite[Example 4.2.7]{fulton} we have
\begin{align*}
s(L^{-1}\cap L^\perp, L^{-1}\times L^\perp)
&=
c(\mathcal O(1)) \cdot 
s(L^{-1}\cap L^\perp, L^{-1}\times (L^\perp_S\setminus\{S\}))\\
&=
(1 + \zeta) \cdot 
s(L^{-1}\cap L^\perp, L^{-1}\times L^\perp_S),
\end{align*}
where the second equality follows from $c(\mathcal O (1)) = 1 + \zeta$ and \cite[Proposition~4.2]{fulton}.
Thus we have the following equality of terms in $\CH_0$
\[
\{
(1+\zeta)^{N+1}\cdot s(L^{-1}\cap L^\perp, L^{-1} \times L_S^\perp)
\}_0
=
\{
(1+\zeta)^N\cdot s(L^{-1}\cap L^\perp, L^{-1} \times L^\perp)
\}_0.
\]
Expanding the right hand side, we obtain the term that is subtracted in~\eqref{new-formula}. 
\end{proof}

Formula~\eqref{new-formula} simplifies when the intersection $L^{-1} \cap L^\perp$ is finite and only contains smooth points of the reciprocal variety $L^{-1}$.
The following immediate corollary is used in~\cite{dye2020maximum} to compute the ML degrees of all three-dimensional subspaces of $\bbS^3$ (also listed in Section~\ref{sec:space}.3).

\begin{cor}
\label{cor:simpleSegre}
Let $\Lcal \subset \bbS^n$ be a linear space such that the intersection $L^{-1} \cap L^\perp$ is finite and consists only of smooth points of $L^{-1}$.
Then the ML degree of $\Lcal$ is
\begin{align*}
    \deg L^{-1} - \deg (L^{-1} \cap L^\perp),
\end{align*}
where the second term is the scheme-theoretic degree of the intersection $L^{-1} \cap L^\perp$
(i.e., the constant coefficient of its Hilbert polynomial).
\end{cor}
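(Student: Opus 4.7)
The plan is to derive the corollary as a direct specialization of Theorem~\ref{segre-classes-formula}, carrying out two simplifications in turn. First, I invoke the finiteness hypothesis: if $L^{-1}\cap L^\perp$ is finite, then $\beta = \dim(L^{-1}\cap L^\perp) = 0$, so the sum in formula~\eqref{new-formula} collapses to its $j=0$ term. Since $\binom{N}{0}=1$, this yields
\[
\mathrm{MLdeg}(\mathcal{L}) \;=\; \deg L^{-1} \;-\; \sigma^0\bigl(L^{-1}\cap L^\perp,\,L^{-1}\times L^\perp\bigr).
\]

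The remaining task is to identify $\sigma^0\bigl(L^{-1}\cap L^\perp,\,L^{-1}\times L^\perp\bigr)$ with the scheme-theoretic degree of $L^{-1}\cap L^\perp$. I would argue as follows: by hypothesis the reciprocal variety $L^{-1}$ is smooth at every point of the intersection, and the polar space $L^\perp$ is smooth everywhere since it is linear; hence the product $L^{-1}\times L^\perp$ is smooth at every point of $L^{-1}\cap L^\perp$. For a zero-dimensional closed subscheme $Z$ of a variety $V$ that is smooth along $Z$, the total Segre class $s(Z,V)$ lives entirely in $\mathrm{CH}_0(Z)$ and coincides with the fundamental cycle $[Z]$ accounting for the length at each point (this is the general fact that the top-dimensional component of $s(Z,V)$ is $[Z]$, combined with the fact that in our case there is \emph{only} the top-dimensional component). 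Taking degrees then gives $\sigma^0(L^{-1}\cap L^\perp,\,L^{-1}\times L^\perp) = \deg(L^{-1}\cap L^\perp)$, where the right-hand side is interpreted as the constant term of the Hilbert polynomial, i.e.\ the total length $\dim_{\mathbb{C}} \mathcal{O}_{L^{-1}\cap L^\perp}$.

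Substituting back yields $\mathrm{MLdeg}(\mathcal{L}) = \deg L^{-1} - \deg(L^{-1}\cap L^\perp)$, as required.

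The main obstacle is the second step: justifying that in the zero-dimensional, smooth-ambient setting, $s^0$ coincides with the scheme-theoretic degree. One can make this explicit via Fulton's construction of Segre classes through the projective normal cone $\mathbb{P}(C_Z V \oplus 1)$: when $\dim Z = 0$, the sum $\sum_i (-1)^i c_1(\mathcal{O}(1))^i$ contributes only in $\mathrm{CH}_0(Z)$, and the pushforward to $Z$ recovers the local length at each component point. All other parts of the argument are purely bookkeeping from Theorem~\ref{segre-classes-formula}.
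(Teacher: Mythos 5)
Your first step is correct, and it is essentially all the paper itself does: the corollary is presented as immediate from Theorem~\ref{segre-classes-formula}, and finiteness of $L^{-1}\cap L^\perp$ indeed gives $\beta=0$, collapsing~\eqref{new-formula} to $\deg L^{-1}-\sigma^0(L^{-1}\cap L^\perp,\,L^{-1}\times L^\perp)$. The genuine gap is the general lemma you invoke in the second step: it is \emph{false} that a zero-dimensional subscheme $Z$ of a variety $V$ smooth along $Z$ satisfies $s(Z,V)=[Z]$. Take $V=\mathbb{C}^2$ and $Z$ defined by $(x,y)^2$: then $\mathrm{length}(Z)=3$, but the blow-up of $V$ along $Z$ is the blow-up at the origin with exceptional Cartier divisor $E'=2E$, so $s(Z,V)=\eta_*\bigl(E'/(1+E')\bigr)$ has zero-dimensional part $-\eta_*\bigl((E')^2\bigr)$, of degree $4$ since $E^2$ has degree $-1$; hence $\sigma^0(Z,V)=4\neq 3$. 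The top-dimensional piece of a Segre class records a Samuel-type multiplicity of the normal cone, not the Hilbert--Samuel length, so your closing sketch via $\mathbb{P}(C_ZV\oplus 1)$ --- where you assert the pushforward ``recovers the local length at each point'' --- is precisely the step that fails for non-reduced $Z$, smoothness of $V$ notwithstanding.

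Your argument does go through in two special cases: if $Z=L^{-1}\cap L^\perp$ is \emph{reduced}, each of its points is a smooth point of $L^{-1}\times L^\perp$, so $s(P,V)=[P]$ because the multiplicity of a variety along a smooth point is one \cite[\S~4.3]{fulton}, and the constant coefficient of the Hilbert polynomial is just the number of points; and if $Z\hookrightarrow L^{-1}\times L^\perp$ is a local complete intersection, the normal cone is a bundle $N$, so $s(Z,V)=c(N)^{-1}\cap[Z]$ and only $[Z]$ survives in $\CH_0$, giving $\sigma^0=\mathrm{length}(Z)$. But the corollary is phrased to allow non-reduced intersections --- that is why it speaks of the constant coefficient of the Hilbert polynomial --- and neither case is automatic: locally at a point $P$, the diagonal copy of $Z$ is cut out of the $(N-1)$-dimensional smooth variety $L^{-1}\times L^\perp$ by the $N$ local equations of the diagonal, one more than the codimension, which is exactly the pattern of the fat-point counterexample above. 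To close the gap you must either rule out this degeneration (prove reducedness or lci-ness of $Z$ in this geometric situation) or argue along a different route, for instance: B\'ezout applied to $L^{-1}\cdot L_S^\perp$ together with the Cohen--Macaulay criterion for proper intersection multiplicities \cite[Proposition~8.2]{fulton} yields $i\bigl(P;\,L^{-1}\cdot L_S^\perp\bigr)=\mathrm{length}_P\bigl(L^{-1}\cap L_S^\perp\bigr)$ at each $P\in Z$, since $L^{-1}$ is smooth (hence Cohen--Macaulay) there and $L_S^\perp$ is linear; one must then still verify that for generic $S$ this local length agrees with $\mathrm{length}_P\bigl(L^{-1}\cap L^\perp\bigr)$.
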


\begin{exa}
Let $\Lcal \subset \bbS^3$ be a four-dimensional subspace whose polar space $\Lcal^\perp$ is a regular pencil spanned by a rank-one and a rank-two matrix. 
This pencil has Segre symbol $[(1 1) 1]$ (see Section~\ref{sec:space}.4).
Up to congruence, the pencil is spanned by $\left[\begin{smallmatrix}
1&0&0 \\ 0&1&0 \\ 0&0&0
\end{smallmatrix} \right]$ and $\left[\begin{smallmatrix}
0&0&0 \\ 0&0&0 \\ 0&0&1
\end{smallmatrix} \right]$.
We compute the ML degree of $\Lcal$ using our intersection theory formula in Theorem~\ref{segre-classes-formula}. A \texttt{Macaulay2} computation reveals that the reciprocal variety $L^{-1}$ has degree $4$.
Next, we apply the function \texttt{segre(Z,V)} 
for $V = L^{-1} \times L^\perp$
and $Z = \Delta \cap V$
to obtain 
\begin{align*}
    \mathrm{segre}(Z,V) = 2H_1^5H_2^5,
\end{align*}
where $H_1$ and $H_2$ are the hyperplane classes in the Chow rings of the factors of $\PP^5 \times \PP^5$.
This corresponds to the Segre class $2 \zeta^5$ in the Chow ring of the diagonal $\Delta \cong \PP^5$.
Hence, $\sigma^0(Z,V) = 2$ and $\sigma^j (Z,V) = 0$ for all $j > 0$, so Theorem~\ref{segre-classes-formula} tells us that the ML degree of $\Lcal$ is $\deg L^{-1} - \sigma^0(Z,V) = 4-2 = 2$. We include this computation in our supplementary code~\cite{ourcode}. 

However, we cannot apply the simplified version of the formula in Corollary~\ref{cor:simpleSegre}, even though $L^{-1} \cap L^\perp$ is a single point.
This is because the point is singular on the reciprocal variety $L^{-1}$.
In fact, $L^{-1}$ is singular along two lines and two isolated points. 
The two lines intersect exactly at the point $L^{-1} \cap L^\perp$.
The scheme-theoretic degree of the intersection $L^{-1} \cap L^\perp$ is in fact $1$, which shows that the formula in Corollary~\ref{cor:simpleSegre} does not hold in this case.
\end{exa}

\begin{exa}
We revisit the four-dimensional linear space $\mathcal L\subset \mathbb S^{3}$ in Examples \ref{ex:running_example} and \ref{ex:running_exampleAgain}.
Its polar space $L^\perp$ is a singular pencil with Segre symbol  $[1 1 ; ; 1]$ (see Section~\ref{sec:space}.4).
Up to congruence, $\Lcal$ is the only linear subspace of $\bbS^3$ with non-zero ML degree such that the intersection $L^{-1} \cap L^\perp$ is not finite.

In fact, the reciprocal variety $L^{-1}$ is singular along a plane that contains the line $L^\perp$.
The singular plane contains two other embedded lines that meet $L^\perp$ in two points. 
Using our Segre classes approach, we can determine how much this singular structure contributes to the degree of $L^{-1}$.

Let $V = L^{-1} \times L^\perp$ and $Z = \Delta \cap V$. 
The function \texttt{segre(Z,V)} yields the Segre class $-7\zeta^5 + 2\zeta^4$ in the Chow ring of $\Delta \cong \PP^5$.
Hence, $\sigma^0(Z,V) = -7$,  $\sigma^1(Z,V) = 2$, and $\sigma^j(Z,V) = 0$ for all $j>1$.
Applying Theorem~\ref{segre-classes-formula}, we see that the ML degree of $\Lcal$ is 
$\deg L^{-1} - \sigma^0(Z,V) - 5 \sigma^1(Z,V) = 4 + 7 - 10 = 1$.
The details of this computation can be found in our supplementary code~\cite{ourcode}. 
\end{exa}

\begin{rem}
In fact, we first encountered the formula~\eqref{new-formula} in \cite{aadlandsvik} where it is used to determine if the join of two projective varieties has the expected dimension.
Our multiplicative formula~\eqref{eqn:mlproduct} for the ML degree is exactly the degree of the class appearing in \cite[Proposition~2.2(i)]{aadlandsvik} (using the substitution $r \mapsto 2$, $X_1 \mapsto L^{-1}$, $X_2 \mapsto L^\perp$, $n \mapsto N-1$, $m \mapsto n+r-2 = N-1$).
In the proof of~\cite[Theorem~2.4]{aadlandsvik} it is shown that that degree is given by our formula~\eqref{new-formula}.
This gives an alternative argument for Theorem~\ref{segre-classes-formula}. 
\end{rem}

We now turn our attention to a statistically meaningful example. Formula \ref{segre-classes-formula} allows us to give an explanation for the ML degree of the smallest Gaussian graphical model with ML degree greater than $1$, namely, the model associated to the undirected 4-cycle in Figure~\ref{fig:4cycle}.

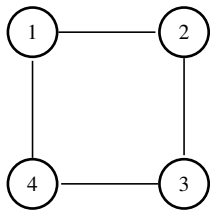
\begin{figure}[h]
    \centering
    \begin{tikzpicture}[-,>=stealth',shorten >=1pt,auto,node distance=2cm,semithick]
    \tikzstyle{every node}=[circle,line width =1pt,font=\scriptsize,minimum height =0.65cm]
        \node (i1) [draw] {1};
        \node (i2) [right of = i1, draw] {2};
        \path (i1) edge (i2);
        
        \node (i3) [below of = i2, draw] {3};
        \path (i2) edge (i3);
        
        \node (i4) [left of = i3, draw] {4};
        \path (i3) edge (i4);
        \path (i4) edge (i1);
    \end{tikzpicture}
    \caption{The undirected 4-cycle.}
    \label{fig:4cycle}
\end{figure}

\begin{exa}\label{ex:4cycle}
The linear space corresponding to the Gaussian 4-cycle model is
$ \Lcal = \{ \K \in \bbS^4 \mid \kappa_{13} = \kappa_{24} = 0 \}$, since the edges $1-3$ and $2-4$ are missing from the graph (see Figure~\ref{fig:4cycle}).
The polar space $L^\perp$ is a regular pencil that intersects the reciprocal variety $L^{-1}$ at two points. 
Both points are singular on $L^{-1}$ and a \texttt{Macaulay2} computation using the function \texttt{segre(Z,V)}, where $V = L^{-1} \times L^\perp$ and $Z = \Delta \cap V$, reveals that each point contributes $2$ to the 0-th Segre class: $\sigma^0(Z,V) = 2+2=4$.
Formula \ref{segre-classes-formula} then computes the ML degree to be equal to $\deg L^{-1} - \sigma^0(Z,V) = 9-(2+2) = 5$.
\end{exa}

We believe that understanding the intersection theory behind larger $n$-cycles could shed light on a 2008 conjecture concerning their ML degree: it is conjectured in \cite[Section 7.4]{drton2008lectures} that the ML degree of $\Lcal_n$ is $(n-3)2^{n-2}+1$,
where $\Lcal_n$ is the linear space associated to the Gaussian $n$-cycle model. 

The intersection $L_4^{-1} \cap L_4^\perp$ from the $4$-cycle example above is a \emph{monomial scheme}, i.e. its defining ideal is generated by monomials.
When investigating the $5$-cycle, we see that the same is true for the intersection $L_5^{-1} \cap L_5^\perp$. We conjecture that the intersection $L_n^{-1} \cap L_n^\perp$ is a monomial scheme for all $n$.

The computations in Example \ref{ex:4cycle} quickly become prohibitive for larger $n$ with the function \texttt{segre}. However, there is an alternative geometric way of interpreting and computing Segre classes of monomial schemes due to Aluffi \cite{aluffi2016segre}. 
This approach expresses Segre classes of \emph{regular crossings monomial schemes} as integrals over polytopal Newton regions. 
These integrals can be efficiently computed by triangulating the Newton regions.
Unfortunately, this method does not apply as is to our 4-cycle example since the singularities described in Example~\ref{ex:4cycle} interfere with the regular crossings assumption. 
We expect a generalization of the technique in~\cite{aluffi2016segre} to be a promising way to make progress towards the conjecture in~\cite{drton2008lectures}.

\section{Full classification for $n=3$} \label{sec:space}

We compute the ML degree of every regular subspace in $\mathbb{S}^3$. Here are the results listed by dimension.

\paragraph{\textbf{1) Lines.}}
A linear space spanned by a full-rank matrix has ML degree one.

\paragraph{\textbf{2) Planes.}}
There are five congruence classes of 2-dimensional regular linear spaces in $ \mathbb{S}^3$. The ML degrees are listed in~\cite[Example 1.3]{pencils} by Segre symbol.
\begin{center}

\begin{tabular}{c|ccccc}
    & $[1~1~1]$ & $[2~1]$ & $[(1~1)~1]$ & $[3]$  & $[(2~1)]$ \\ \hline
  $\deg {L}^{-1}$   & 2 & 2 & 1 & 2 & 1 
  \\
  mld($\Lcal$)             & 2 & 1 & 1 & 0 & 0 
\end{tabular}
\end{center}

\paragraph{\textbf{3) 3-Planes.}}
There are 13 types of 3-dimensional regular linear spaces in $\mathbb{S}^3$ described in~\cite{wall}.
Their ML degrees are computed in \cite[Table 1]{dye2020maximum}:
\begin{center}
    \begin{tabular}{c|ccccccccccccc}
     & $A$ & $B$ & $B^\ast$ & $C$ & $D$ & $D^\ast$ & $E$ & $E^\ast$ & $F$ & $F^\ast$ & $G$ & $G^\ast$ & $H$\\ \hline
  $\deg {L}^{-1}$   & 4 & 3 & 4 & 3 & 2 & 4 & 1 & 4 & 2 & 2 & 1 & 2 & 1 
  \\
  mld($\Lcal$)                & 4 & 3 & 3 & 2 & 2 & 2 & 1 & 1 & 0 & 1 & 0 & 0 & 0
\end{tabular}
\end{center}

\paragraph{\textbf{4) 4-Planes.}}
The congruence classes of 4-dimensional linear spaces are in one-to-one correspondence with the congruence classes of 2-dimensional linear spaces by polarity (via the trace pairing). 
Hence there are 8 such congruence classes according to their polar Segre symbol. Using the representatives in \cite[Table 0]{wall}, we compute their ML degrees in \texttt{Macaulay2}:
\begin{center}
\begin{tabular}{c|cccccccc}
    & $[1 1 1]$ & $[2 1]$ & $[(1 1) 1]$ & $[3]$ & $[(2 1)]$ & $[; 1 ;]$ & $[1 1 ; ; 1]$ & $[2 ; ; 1]$\\ \hline
  $\deg {L}^{-1}$   & 4 & 4 & 4 & 4 & 4 & 1 & 4 & 1 
  \\
  mld($\Lcal$)             & 4 & 3 & 2 & 2 & 1 & 1 & 1 & 0
\end{tabular}
\end{center}
These computations are included in our supplementary code~\cite{ourcode}. 

\paragraph{\textbf{5) Hyperplanes.}}
The ML degree of a hyperplane $\{K \in \mathbb{S}^3 \mid \tr(A K) = 0 \} $ is $\mathrm{rk}(A)-~1$, by Proposition~\ref{prop:hyperplane}.

\section{Maximum likelihood degree zero}
\label{sec:mldeg0}

Let $\mathcal L$ be a regular linear space of symmetric matrices.
As seen in Section~\ref{sec:geometry},
the ML degree is the number of invertible matrices in the intersection $  L^{-1}\cap L^\perp_S$ for a generic matrix $S \in \mathbb{S}^n$.
The case of ML degree is zero is very special.
It implies that none of the matrices in $\mathcal L$ are positive definite, see Corollary \ref{prop:nointerior}. 
Hence $\mathcal L$ does not define a statistical model.
Geometrically, $\mathcal L$ belongs to a special type of degenerate linear spaces, and must satisfy the following equivalent conditions.

\begin{thm}
\label{thm:mlDegreeZero}
Let $\mathcal L\subset \mathbb S^n$ be a regular subspace. The following are equivalent:

(i)\ \ \ The ML degree of $\Lcal$ is zero.

(ii)\ \ The restriction $\pi_{L^\perp} |_{ L^{-1}}$ of the projection in~\eqref{eqn:pi_map} is not dominant.

(iii) The join of $L^{-1}$ and $L^\perp$ is not the whole ambient space $\PP \mathbb{S}^n$.

(iv)\ \,A generic $\K \in \Lcal$ satisfies $(\K \Lcal^\perp \K)  \cap \Lcal \neq \lbrace 0 \rbrace$.

(v)\ \,For every pair of bases  $\{A_i\}_{i=1}^c$ of $\Lcal^\perp$ and $\{B_k\}_{k=1}^d$ of  $\Lcal$,
we have the vanishing of the polynomial $\det(M) \in \mathbb{C}[s_1,\dotsc, s_d]$, where the matrix $M$ has 
$$
M_{ij} = \sum_{k, l=1}^d  s_ks_l \cdot \tr(A_iB_kA_jB_l)
\in \mathbb C[s_1,\dotsc, s_d].
$$
\end{thm}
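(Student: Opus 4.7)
My plan is to prove the cycle of equivalences (i)$\Leftrightarrow$(ii)$\Leftrightarrow$(iii)$\Leftrightarrow$(iv)$\Leftrightarrow$(v), moving from the definition of ML degree toward the explicit polynomial criterion. The first two are essentially unwindings of earlier material. For (i)$\Leftrightarrow$(ii) I invoke Proposition~\ref{prop:projection}: the ML degree is the degree of $\pi_{L^\perp}|_{L^{-1}}$, and since the domain and codomain have matching dimensions, this degree vanishes exactly when the map fails to be dominant. For (ii)$\Leftrightarrow$(iii), dominance of $\pi_{L^\perp}|_{L^{-1}}$ is equivalent to saying that a generic $S$ lies on some subspace $L^\perp_\Sigma$ with $\Sigma \in L^{-1}$, i.e.\ on a line joining a point of $L^{-1}$ to a point of $L^\perp$. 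This is precisely the statement that the join of $L^{-1}$ and $L^\perp$ fills $\PP\bbS^n$.

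For (iii)$\Leftrightarrow$(iv) I use Terracini's lemma; this is the geometric heart of the result. A dimension count gives $\dim L^{-1}+\dim L^\perp +1 = \dim\PP\bbS^n$, so the join has the expected dimension. Terracini (as cited in Lemma~\ref{lemma:1} via \cite[Corollary~1.11]{aadlandsvik}) then asserts that the join equals $\PP\bbS^n$ iff, for generic $\Sigma\in L^{-1}$ and $\Gamma\in L^\perp$, the embedded tangent spaces satisfy $\mathbb{T}_\Sigma L^{-1}+\mathbb{T}_\Gamma L^\perp = \PP\bbS^n$. Since $L^\perp$ is linear, $\mathbb{T}_\Gamma L^\perp=\Lcal^\perp$. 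Writing $\Sigma=\K^{-1}$ for generic invertible $\K\in\Lcal$ and differentiating the matrix inversion map yields $\mathbb{T}_\Sigma L^{-1}=\K^{-1}\Lcal \K^{-1}$. The spaces $\K^{-1}\Lcal \K^{-1}$ and $\Lcal^\perp$ have complementary dimensions, so their sum fills $\bbS^n$ iff their intersection is zero. Multiplying by $\K$ on both sides transforms the intersection $\K^{-1}\Lcal \K^{-1}\cap \Lcal^\perp$ into $\Lcal\cap \K\Lcal^\perp \K$, so the negation of the Terracini condition becomes exactly (iv).

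For (iv)$\Leftrightarrow$(v), I rewrite the condition in coordinates. Fixing bases $\{A_i\}$ and $\{B_k\}$ and setting $\K=\sum_k s_k B_k$, a generic element of $\K\Lcal^\perp \K$ has the form $\sum_i t_i \K A_i \K$, and it lies in $\Lcal$ iff $\tr(A_j\cdot \sum_i t_i \K A_i \K)=0$ for every $j=1,\dotsc,c$. Expanding $\K$ in the basis and using cyclicity of trace, this is the homogeneous linear system $M(s)t=0$ with $M_{ij}(s)=\sum_{k,l}s_k s_l \tr(A_i B_k A_j B_l)$, exactly as in the statement. A nonzero solution $t$ exists iff $\det M(s)=0$, so (iv) is the assertion that $\det M$ vanishes for generic $s\in \CC^d$, equivalently identically, which is (v). The condition is basis-free because changing bases multiplies $\det M$ by a nonzero constant and applies a linear change of variables in $s$.

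The main obstacle I expect is the tangent space identification together with the dimension-count/Terracini step in (iii)$\Leftrightarrow$(iv); the remaining links are essentially bookkeeping once the geometric picture is in place. One subtlety worth handling carefully is confirming that for generic $\K\in\Lcal$ (which is invertible by regularity of $\Lcal$) the point $\K^{-1}$ is a smooth point of $L^{-1}$, so that the tangent space formula $\K^{-1}\Lcal \K^{-1}$ indeed gives the embedded tangent space—this follows because matrix inversion restricts to a birational isomorphism $\Lcal\dashrightarrow \Lcal^{-1}$.
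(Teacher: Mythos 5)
Your proposal is correct and follows essentially the same route as the paper's proof: (i)$\Leftrightarrow$(ii) via Proposition~\ref{prop:projection}, (ii)$\Leftrightarrow$(iii) by relating dominance of $\pi_{L^\perp}|_{L^{-1}}$ to the join filling $\PP\bbS^n$, (iii)$\Leftrightarrow$(iv) via Terracini's lemma (\cite[Corollary~1.11]{aadlandsvik}) together with the tangent space identification $\mathbb{T}_{\K^{-1}}L^{-1} = \K^{-1}\Lcal\K^{-1}$ and the complementary-dimension count, and (iv)$\Leftrightarrow$(v) by the same expansion in bases yielding the system $M(s)t=0$. Your added remarks on smoothness of $\K^{-1}$ on $L^{-1}$ and on basis-independence of the vanishing of $\det M$ are sound refinements of details the paper leaves implicit.
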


\begin{proof}
The first two conditions are equivalent by Proposition~\ref{prop:projection}.
For conditions (ii) and (iii), we abbreviate $\pi_{\mathcal L^{\bot}}$ to $\pi$. We have
\[
\pi(\join( L^{-1},  L^{\bot}))
=
\pi( L^{-1})
\quad \text{and}\quad
\join(L^{-1}, L^\perp)
=
\pi^{-1}
\overline{\pi (L^{-1})}.
\]
The first relation shows that (ii) implies (iii). The second relation shows the converse. For the equivalence of (iii) and (iv), observe that $\join(L^{-1},L^\perp)\neq\PP \bbS^n$ if and only if $\overline{\mathcal L^{-1} + \mathcal L^\perp}\neq \bbS^{n}$.
A generic point $\Sigma\in \mathcal L^{-1} + \mathcal L^\perp$ is of the form 
$\Sigma = \K^{-1} + \Sigma'$ where $\K \in \Lcal$ is an invertible matrix 
and $\Sigma' \in  \Lcal^\perp$.
By Terracini's lemma \cite[Corollary~1.11]{aadlandsvik}, 
$
\mathbb{T}_\Sigma(\mathcal{L}^{-1} + \mathcal L^\perp)
= \mathbb{T}_{\K^{-1}}{\Lcal^{-1}}
+ \mathbb{T}_{\Sigma'}{\Lcal^\perp}$.
By matrix calculus, we see that  $\mathbb{T}_{\K^{-1}}{\Lcal^{-1}} = \K^{-1} \Lcal \K^{-1}$.
Hence, (iii) is equivalent to $\mathbb{T}_\Sigma(\mathcal{L}^{-1} + \mathcal L^\perp) = \K^{-1} \Lcal \K^{-1} + \Lcal^\perp$ being contained in a hyperplane,
which means that 
the intersection $\K^{-1} \Lcal \K^{-1} \cap \Lcal^\perp$ is non-zero. This is equivalent to condition~(iv). 

For the equivalence of (iv) and (v), write $\K = \sum_k s_k B_k$ for a generic $\K\in \mathcal L$ and $\Sigma = \sum_j t_j A_j$ for $\Sigma \in \mathcal L^{\perp}$. We see that condition (iv) is equivalent to the following linear system of equations having a non-zero solution $t$ for generic $s$:
\begin{equation}\label{bigsystem}
\tr\left(A_i \left(\sum_ks_kB_k\right)
\left(\sum_jt_jA_j\right)
\left(\sum_ls_lB_l\right)\right) = 0,
\quad i=1,\dotsc,c.
\end{equation}
Define the $c\times c\times d\times d$ tensor $T$ by $T_{ijkl} = \tr(A_iB_lA_jB_k)$. Then~\eqref{bigsystem} means
\[
\sum_j\left(\sum_{k,l}s_ks_lT_{ijkl}\right)t_j
= 0, \quad i=1,\dotsc,c.
\]
This is a linear system of equations in the $t_j$ which has a non-zero solution for generic $s$ if and only if $\det( M) = 0$, where $M_{ij}(s)=\sum_{k,l}s_ks_lT_{ijkl}$.
This proves the equivalence of (iv) and (v).
\end{proof}

\begin{cor}
\label{prop:nointerior}
A linear space $\Lcal$ of real symmetric matrices with ML degree 0 has empty intersection with the interior of the positive definite cone. 
\end{cor}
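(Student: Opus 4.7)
The plan is to prove the contrapositive: if $\Lcal$ meets the interior of the positive definite cone, then the ML degree of $\Lcal$ is nonzero. The argument exploits the classical convex-analytic fact that the Gaussian log-likelihood is strictly concave on the positive definite cone, combined with the characterization already established in Theorem~\ref{thm:mlDegreeZero}.

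First, fix any real positive definite $S \in \bbS^n$, and restrict $\ell_S$ to the convex set $\Lcal \cap \PD_n$, which by assumption is nonempty (and open in $\Lcal$). The function $\ell_S(\K) = \log\det \K - \tr(\K S)$ is strictly concave on $\PD_n$. It is also coercive on $\Lcal \cap \PD_n$: $\log\det \K \to -\infty$ as $\K$ approaches the boundary of the cone, while for $S$ positive definite one has $\tr(\K S) \geq \lambda_{\min}(S)\, \tr(\K)$, which grows without bound as $\|\K\| \to \infty$ inside $\PD_n$. Hence $\ell_S$ attains a unique maximum at some interior point $\K_S \in \Lcal \cap \PD_n$, which is therefore a (real) critical point of $\ell_S$.

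The critical-point condition $\K_S^{-1} - S \in \Lcal^\perp$ rewrites as
\[
S = \K_S^{-1} + (S - \K_S^{-1}) \in \Lcal^{-1} + \Lcal^\perp.
\]
Since this holds for every real positive definite $S$, and the real positive definite matrices form a Euclidean-open subset of the real points of $\bbS^n$, they are Zariski-dense in the complex variety $\bbS^n$. Therefore $\overline{\Lcal^{-1} + \Lcal^\perp} = \bbS^n$, i.e.\ the join of $L^{-1}$ and $L^\perp$ is all of $\PP\bbS^n$. By the equivalence (i)$\Leftrightarrow$(iii) of Theorem~\ref{thm:mlDegreeZero}, the ML degree of $\Lcal$ must then be nonzero, contradicting the hypothesis and completing the contrapositive.

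I do not anticipate any serious obstacle here: the argument is essentially a packaging of standard MLE convex-analytic facts together with the just-proved characterization. The only step requiring a word of justification is the Zariski-density of $\PD_n$ in $\bbS^n$, which is immediate from the general principle that a Euclidean-open subset of the real points of a complex algebraic variety is Zariski-dense in it.
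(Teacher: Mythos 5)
Your proof is correct, but it takes a genuinely different route from the paper's. The paper argues via condition (iv) of Theorem~\ref{thm:mlDegreeZero}: if $\Lcal$ contained a positive definite matrix, then (since the PD matrices in $\Lcal$ form a Euclidean-open, hence Zariski-dense, subset of $\Lcal$) a \emph{generic} $\K \in \Lcal$ can be taken positive definite and satisfies $(\K\Lcal^\perp\K)\cap\Lcal \neq \{0\}$; a congruence change of basis reduces to $\K = I$, producing a nonzero real matrix $X \in \Lcal \cap \Lcal^\perp$, which is impossible since $\tr(X^2) > 0$ for nonzero real symmetric $X$ --- a three-line, purely linear-algebraic argument resting on the positive definiteness of the trace form. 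You instead go through condition (iii): you invoke the classical convex-analytic existence of the Gaussian MLE (strict concavity and coercivity of $\ell_S$ on $\Lcal \cap \PD_n$ for real PD $S$) to place every real positive definite $S$ in $\Lcal^{-1} + \Lcal^\perp$, and then use Zariski density of the PD cone in $\bbS^n$ to conclude that the join of $L^{-1}$ and $L^\perp$ fills $\PP\bbS^n$; all the individual steps check out, including the coercivity estimate $\tr(\K S) \geq \lambda_{\min}(S)\tr(\K)$ and the identification of the join condition with $\overline{\Lcal^{-1}+\Lcal^\perp} = \bbS^n$, which matches the paper's own usage in the proof of Theorem~\ref{thm:mlDegreeZero}. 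What each approach buys: the paper's proof is shorter and stays entirely within the algebraic framework of Theorem~\ref{thm:mlDegreeZero}(iv), while yours makes the statistical content explicit --- it in effect reproves the existence of the MLE for PD sample covariance (the Sturmfels--Uhler fact the paper cites immediately after the corollary) and exhibits, for each PD $S$, an actual real critical point, rather than deriving a contradiction from a genericity condition. One point worth stating explicitly in your write-up: Theorem~\ref{thm:mlDegreeZero} assumes $\Lcal$ is regular, and in your contrapositive this is automatic because $\Lcal$ contains a positive definite, hence full-rank, matrix.
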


\begin{proof}
If $\Lcal$ contains a positive definite matrix $\K$, then
a generic such matrix satisfies 
$(\K \Lcal^\perp \K) \cap \Lcal \neq \lbrace 0 \rbrace$
by Theorem~\ref{thm:mlDegreeZero}(iv).
After a change of basis under congruence we may assume that $\K$ is the identity.
We get $\Lcal^\perp \cap \Lcal \neq \{0\}$, a contradiction.
\end{proof} 
This shows that any linear space of ML degree 0 only intersects the positive semi-definite cone at the rank deficient matrices. This result is consistent with what is known about the MLE for linear concentration models (\cite[Corollary 2.2]{sturmfels2010multivariate}), namely that the MLE, if it exists, is the unique maximizer of the determinant over the spectrahedron defined by the fiber of the linear sufficient statistics map intersected with $\bbS^n$.
It could be tempting to think that the ML degree 0 linear spaces are exactly those that only intersect the PD cone at the boundary. The following is a counter-example.

\begin{exa}
Consider the one-dimensional linear space $\Lcal$ spanned by
$\left[ \begin{smallmatrix} 1 & 0 \\ 0 & -1 \end{smallmatrix} \right]. $
The reciprocal variety $\Lcal^{-1}$ is equal to $\Lcal$, while $\Lcal^\perp$ consists of matrices
$ \left[\begin{smallmatrix} a & b \\ b & a \end{smallmatrix}\right].$
We see that $\Lcal^{-1} + \Lcal^\perp$ fills the space of $2 \times 2$ symmetric matrices. 
Hence $\Lcal$ has strictly positive ML degree (in fact, ML degree one), but it only intersects the PD cone at zero.  
\end{exa}

We now describe a geometrically interesting subclass of models with ML degree zero.

\begin{rem}\label{rem:geometrically-interesting}
A sufficient condition for a linear space $\Lcal$ to have ML degree zero is if
the reciprocal variety $\Lcal^{-1}$ and the annihilator  $\Lcal^\perp$ lie in a common hyperplane,  by Theorem~\ref{thm:mlDegreeZero}(iii). In other words,
\begin{align}
\label{eq:commonHyperplane}
    \text{there exists } \K \in \Lcal \text{ such that } \mathcal L\subseteq (\{\K\}^{\perp})^{-1}.
\end{align}

Note that $\K$ must be rank deficient: If $\K \in \Lcal$ had full rank, then $\K^{-1} \in \Lcal^{-1}$. But then $\Lcal^{-1} \subseteq \K^\perp$ implies $\K^{-1} \in \K^\perp$, and hence $\tr (\K \K^{-1}) = 0$, a contradiction.
\end{rem}

\begin{lemma}
\label{lem:invisibledeterminant}
For a regular subspace $\mathcal L \subset \bbS^n$, condition~\eqref{eq:commonHyperplane} is equivalent to:
$$
\text{there exists } \K \in \Lcal \text{ such that } \det(P + t\K) = \det(P)
$$
for all $P \in \Lcal$ and all $t\in \CC$.
\end{lemma}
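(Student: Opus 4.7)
The plan is to translate both sides of the claimed equivalence into polynomial identities and link them via the standard derivative formula for the determinant. The main ingredients I would invoke are Lemma~\ref{lem:inverseeq} (to describe the reciprocal hypersurface of a hyperplane) and the identity $\frac{d}{dt}\det(P+tK) = \tr(K\cdot\operatorname{adj}(P+tK))$, which was already used in the proof of Proposition~\ref{prop:firstapproach}.

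First I would unpack condition~\eqref{eq:commonHyperplane}. The hyperplane $\{K\}^\perp = \{\Sigma\in\bbS^n : \tr(K\Sigma)=0\}$ has annihilator matrix $K$, so by Lemma~\ref{lem:inverseeq} the reciprocal hypersurface $(\{K\}^\perp)^{-1}$ is cut out by the single polynomial $\tr(K\cdot\operatorname{adj}(\Sigma))$. Thus $\mathcal L \subseteq (\{K\}^\perp)^{-1}$ is equivalent to the vanishing $\tr(K\cdot\operatorname{adj}(P)) = 0$ for every $P\in\mathcal L$, i.e.\ to the derivative of $\det(P+tK)$ with respect to $t$ vanishing at $t=0$ for every $P\in\mathcal L$.

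For the forward direction, assume the vanishing above. Since $K\in\mathcal L$ and $\mathcal L$ is a linear subspace, $P+tK\in\mathcal L$ for every $P\in\mathcal L$ and every $t\in\CC$. Applying the vanishing to $P+tK$ in place of $P$ shows that $\tr(K\cdot\operatorname{adj}(P+tK))=0$ identically in $t$, so $\frac{d}{dt}\det(P+tK)\equiv 0$. Hence the polynomial $\det(P+tK)\in\CC[t]$ is constant in $t$, and its value at $t=0$ is $\det(P)$. Conversely, if $\det(P+tK) = \det(P)$ for every $P\in\mathcal L$ and every $t\in\CC$, differentiating at $t=0$ gives $\tr(K\cdot\operatorname{adj}(P)) = 0$ for every $P\in\mathcal L$, which is precisely condition~\eqref{eq:commonHyperplane}.

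The argument is formal once one has Lemma~\ref{lem:inverseeq} and the derivative identity, so I do not anticipate a genuine obstacle. The only subtlety worth highlighting explicitly is the step that upgrades the pointwise vanishing at $t=0$ to identical vanishing in $t$: it relies on $\mathcal L$ being closed under the shift $P\mapsto P+tK$, which is where both the linearity of $\mathcal L$ and the containment $K\in\mathcal L$ are used.
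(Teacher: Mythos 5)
Your proof is correct and follows essentially the same route as the paper's: both unpack condition~\eqref{eq:commonHyperplane} into the vanishing of $\tr(\K \cdot \adj(P))$ on $\Lcal$ and then apply the identity $\frac{d}{dt}\det(P+t\K) = \tr(\K \cdot \adj(P+t\K))$. The only cosmetic difference is that the paper extends $\K = B_0$ to a basis of $\Lcal$ and observes that $\det(\sum_i t_i B_i)$ is independent of $t_0$, whereas you phrase the same argument via the one-parameter shift $P \mapsto P + t\K$ (and cite Lemma~\ref{lem:inverseeq} explicitly, which the paper uses implicitly).
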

\begin{proof}
Set $B_0 = \K$ and extend to a basis $ \{B_i\}_{i=0}^d$ for $\Lcal$. For all $(t_0,\dotsc,t_d)$ with $\sum_it_iB_i$ invertible, 
condition~\eqref{eq:commonHyperplane} says
\begin{align*}
 0 = \tr\left(B_0 \, \adj(\sum_{i=0}^d t_iB_i)\right) = \frac{d}{dt_0} \det (\sum_it_iB_i).
\end{align*}
Hence the polynomial $\det (\sum_it_iB_i)$ does not depend on $t_0$.
\end{proof}

\begin{exa}
Let $\mathcal L^\perp$ be the singular pencil spanned by 
$\left[\begin{smallmatrix}0&1&0 \\ 1&0&0 \\ 0&0&0\end{smallmatrix}\right]$ and $\left[\begin{smallmatrix}0&0&0 \\ 0&1&0 \\ 0&0&0\end{smallmatrix}\right]$.
Since a generic element of $\mathcal L$ has the form
$\left[\begin{smallmatrix}
t_0&0&t_1 \\ 0&0&t_2 \\ t_1&t_2&t_3
\end{smallmatrix}\right]$,
we see that
\[
\adjHP_{\mathcal L} = \left\lbrace
\left[
\begin{smallmatrix}
-t_2^2 & t_1t_2 & 0 \\ t_1t_2 & t_0t_3 - t_1^2 & -t_0t_2 \\ 0 &-t_0t_2 & 0
\end{smallmatrix}
\right]
\mid
t_0, t_1, t_2, t_3 \in \mathbb C
\right\rbrace.
\]
This shows that $\mathcal L^{-1}$ and $\mathcal L^\perp$ are contained in a common linear space of codimension 2. 
Hence, $\mathcal L$ has ML degree zero by Remark~\ref{rem:geometrically-interesting}.
In fact, up to congruence, this is the only 4-dimensional subspace on $\bbS^3$ with ML degree zero (see Section~\ref{sec:space}.4: the pencil $\Lcal^\perp$ has Segre symbol $[2;;1]$).

Another way to see that $\Lcal$ has ML degree zero is the following. The determinant of a generic element of $\mathcal L$ is $-t_0 t_2^2$. Since it does not depend on $t_1$, we can take $\K = 
\left[\begin{smallmatrix}
0&0&1 \\ 0&0&0 \\ 1&0&0
\end{smallmatrix}\right]$
in Lemma~\ref{lem:invisibledeterminant}. The same reasoning with $t_3$ also applies.
\end{exa}

The same techniques can be used to study the 3-dimensional subspaces $\Lcal \subset \bbS^3$ listed in Section~\ref{sec:space}.3. 

\begin{exa}
Let $\mathcal L  = \{ \left[\begin{smallmatrix}
t_0 & 0 & t_2 \\
0 & t_1 & t_2 \\
t_2 & t_2 & 0
\end{smallmatrix}\right] \mid t_i  \in \mathbb{C} \}$.
It is a 3-dimensional subspace of $\bbS^3$ of type $F$ (see Section~\ref{sec:space}.3).
The determinant is $-(t_0 + t_1)t_2^2$. While the determinant depends on all three variables, the linear forms appearing in it are orthogonal to $t_0 - t_1$. Setting 
$\K =
\left[
\begin{smallmatrix}
1&0&0 \\ 0&-1&0 \\ 0&0&0
\end{smallmatrix}
\right]
$
in Lemma~\ref{lem:invisibledeterminant}
shows that $\mathcal L$ has ML degree zero.
\end{exa}

For \emph{all} regular linear subspaces $\Lcal$ of $\bbS^3$ with ML degree zero (see Section~\ref{sec:space}), it is in fact true that $\Lcal^{-1}$ and $\Lcal^\perp$ are contained in a common hyperplane;
with the following exception (up to congruence).

\begin{exa}
The hyperplane $\Lcal = A^\perp$ with $A = \left[\begin{smallmatrix} 1&0&0 \\ 0&0&0 \\ 0&0&0 \end{smallmatrix}\right]$
has ML degree zero by Proposition~\ref{prop:hyperplane}.
Its reciprocal hypersurface $L^{-1} = \PP \{ \left[\begin{smallmatrix} \sigma_{11}&\sigma_{12}&\sigma_{13} \\ \sigma_{12}&\sigma_{22}&\sigma_{23} \\ \sigma_{13}&\sigma_{23}&\sigma_{33} \end{smallmatrix}\right] \mid \sigma_{22}\sigma_{33} - \sigma_{23}^2 = 0 \}$ is a quadric cone whose vertex set $(\cong \PP^2)$ contains the point $L^\perp$.
Hence, $L^{-1}$ and $L^\perp$ are \emph{not} contained in a common hyperplane, but their join is $L^{-1}$, 
so the ML degree of $\Lcal$ is zero by Theorem~\ref{thm:mlDegreeZero}(iii).

Equivalently, we can compute the $1 \times 1$ matrix $M$ in  Theorem~\ref{thm:mlDegreeZero}(v).
For every matrix $B \in \Lcal$, we have $ABA = 0$, which shows that $M$ is zero.
\end{exa}

In the space of $4 \times 4$ symmetric matrices, 
there are more geometrically interesting regular subspaces with ML degree zero.
We conclude this paper with a class of codimension-two subspaces $\Lcal$, where $L^{-1}$ and $L^\perp$ are not contained in a common hyperplane.

\begin{exa}
Let $L^\perp$ be a tangent line to the variety of rank-one matrices in $\PP \bbS^4$.
After a change of coordinates, we may assume that $$
\Lcal^\perp = \mathrm{span} \left\lbrace 
\left[\begin{smallmatrix} 1&0&0&0 \\ 0&0&0&0 \\ 0&0&0&0 \\ 0 &0 &0 &0 \end{smallmatrix}\right],  \left[\begin{smallmatrix} 0& 1 &0&0 \\ 1& 0&0&0 \\ 0&0&0&0 \\ 0&0&0&0 \end{smallmatrix}\right]
\right\rbrace.
$$ 
The reciprocal variety
is a cubic cone whose vertex set ($\cong \PP^4$) contains  $L^\perp$:
$$L^{-1} =  \{ \Sigma \in \PP \bbS^4 \mid \sigma_{33}\sigma_{44} - \sigma_{34}^2 = 0, \;
\sigma_{23}\sigma_{44} - \sigma_{24}\sigma_{34} = 0, \;
\sigma_{23}\sigma_{34} - \sigma_{33}\sigma_{24} = 0 \}.$$
\end{exa} 

\small
\paragraph{Acknowledgements.}
CA was partially supported by the Deutsche Forschungsgemeinschaft (DFG) in the context of the Emmy
Noether junior research group KR 4512/1-1.
LG was supported by Vetenskapsrådet grant [NT: 2018-03688]. 
KK was supported by the Knut and Alice Wallenberg Foundation within their WASP (Wallenberg AI, Autonomous Systems and Software Program) AI/Math initiative. 
OM was supported by Brummer \& Partners MathDataLab and International Max Planck Research School.

\end{document}